\documentclass[11pt,a4paper]{amsart}

\usepackage[latin1]{inputenc}
\usepackage{amsmath,amsfonts,amssymb}
\usepackage{enumerate}
\usepackage[
      colorlinks=true,    
      urlcolor=blue,    
      menucolor=blue,    
      linkcolor=blue,    
      bookmarks=true,    
      bookmarksopen=true,    
      hyperfootnotes=false,    
      pdfpagemode=UseOutlines    
]{hyperref}
\usepackage[margin=2.5cm]{geometry}

%

\newtheorem{theorem}{Theorem}[section]
\newtheorem{lemma}[theorem]{Lemma}
\newtheorem{proposition}[theorem]{Proposition}

\newtheorem{corollary}[theorem]{Corollary}

\theoremstyle{definition}
\newtheorem{definition}{Definition}

\numberwithin{equation}{section}
\numberwithin{theorem}{section}
\numberwithin{definition}{section}

\newcommand{\R}{\mathbb{R}}
\newcommand{\Z}{\mathbb{Z}}
\newcommand{\Q}{\mathbb{Q}}
\newcommand{\C}{\mathbb{C}}
\newcommand{\A}{\mathfrak{A}}
\newcommand{\B}{\mathfrak{B}}
\newcommand{\fN}{\mathfrak{N}}
\newcommand{\fp}{\mathfrak{p}}
\newcommand{\cN}{\mathcal{N}}
\newcommand{\cH}{\mathcal{H}}
\newcommand{\cL}{\mathcal{L}}
\newcommand{\z}{\boldsymbol{z}}
\newcommand{\x}{\boldsymbol{x}}

\newcommand{\bo}[1]{\boldsymbol{#1}}
\newcommand{\mc}[1]{\mathcal{#1}}

\def\Vol{\textup{Vol}}
\def\lg{\left\lbrace}
\def\rg{\right\rbrace}

\newcommand{\tN}{\widetilde{N}}

\newcommand{\Oseen}{\mathcal{O}}
\newcommand{\ks}{k}
\newcommand{\e}{e}
\newcommand{\n}{n}
\newcommand{\en}{n}
\newcommand{\m}{m}
\newcommand{\el}{l}
\newcommand{\eL}{L}

\newcommand{\Qbar}{\overline{\Q}}

\newcommand{\ord}{\text{ord}}
\newcommand{\fin}{\text{fin}}

\hyphenation{com-po-nent-wise}

 \author{Fabrizio Barroero}

\address{Scuola Normale Superiore\\
Piazza dei Cavalieri 7, 56126 Pisa\\
Italy}
 \email{fbarroero@gmail.com}

\title{Algebraic $S$-integers of fixed degree and bounded height}

\date{\today}

\thanks{F. Barroero is supported by the Austrian Science Foundation (FWF) project W1230-N13 and ERC-Grant No. 267273.}
\subjclass[2010]{Primary 11G50, 11R04.}

\keywords{Heights, algebraic $S$-integers, counting}

\begin{document}

\begin{abstract}
Let $\ks$ be a number field and $S$ a finite set of places of $\ks$ containing the archimedean ones. We count the number of algebraic points of bounded height whose coordinates lie in the ring of $S$-integers of $\ks$. Moreover, we give an asymptotic formula for the number of $\overline{S}$-integers of bounded height and fixed degree over $\ks$, where $\overline{S}$ is the set of places of $\overline{\ks}$ lying above the ones in $S$.
\end{abstract}

\maketitle

\section{Introduction} \label{intro}

In this article we give asymptotic estimates for the cardinality  of certain subsets of $\overline{\Q}^\n$ of bounded height. Here and in the rest of the article, by height we mean the multiplicative absolute Weil height $H$ on the affine space $\Qbar^\n$, whose definition will be recalled in Section \ref{prelim}.

Let $\ks$ be a number field of degree $\m$ over $\Q$ and let $\n$ and $\e$ be positive integers. We fix an algebraic closure $\overline{\ks}$ of $\ks$ and set
$$
\ks(\n,\e)=\lg \bo{\alpha} \in \overline{\ks}^\n: [\ks(\bo{\alpha}):\ks]=\e \rg,
$$
where $\ks(\bo{\alpha})$ is the field obtained by adjoining all the coordinates of $\bo{\alpha}$ to $\ks$. By Northcott's Theorem \cite{Northcott1949}, subsets of $\ks(\n,\e)$ of uniformly bounded height are finite. Therefore, for any subset $A$ of $\ks(\n,\e)$ and $\cH> 0$, we may introduce the following counting function 
$$
N(A,\cH)= |\lg \bo{\alpha}\in A: H(\bo{\alpha})\leq \cH  \rg|.
$$
Various results about this counting function appear in the literature. One of the earliest is a result of Schanuel \cite{Schanuel1979}, who gave an asymptotic formula for 
$N(\ks(\n,1),\cH)$. Schmidt was the first to consider the case $\e>1$. In \cite{Schmidt1993}, he found upper and lower bounds for $N(\ks(\n,\e),\cH)$ while in \cite{Schmidt1995}, he gave asymptotics for $N(\Q(\n,2),\cH)$. Shortly afterwards, Gao \cite{Gao1995} found the asymptotics
for  $N(\Q(\n,\e),\cH)$, provided $\n>\e$. Later Masser and Vaaler \cite{Masser2007} established an asymptotic estimate for $N(\ks(1,\e),\cH)$. 
Finally, Widmer \cite{Widmer2009} proved an asymptotic formula for $N(\ks(\n,\e),\cH)$, provided $\n>5\e/2+5+2/\m\e$. 
However, for general $\n$ and $\e$ even the correct order of magnitude for $N(\ks(\n,\e),\cH)$ remains unknown.

In this article we are interested in counting algebraic $S$-integers. Let $S$ be a finite set of places of $\ks$ containing the archimedean ones. As usual $\Oseen_S$ indicates the ring of $S$-integers of $\ks$. Let $\overline{S}$ be the set of places of $ \overline{\ks}$ that lie above the places in $S$ and let $\Oseen_{\overline{S}}$ be the ring of $\overline{S}$-integers of $\overline{\ks}$. Alternatively, we could think of $\Oseen_{\overline{S}}$ as the ring of those algebraic numbers having minimal polynomial over $k$ that is monic and has coefficients in $\Oseen_S$.

Given $n$ and $e$ positive integers, we set 
$$
\Oseen_S(\n,\e)= \ks(\n,\e) \cap \Oseen_{\overline{S}}^\n = \lg \bo{\alpha} \in \Oseen_{\overline{S}}^\n:  [\ks(\bo{\alpha}):\ks]=\e  \rg.
$$
Let $S_\infty$ be the set of archimedean places of $\ks$. If we choose $S=S_\infty$, then $\Oseen_S=\Oseen_\ks$ is the ring of algebraic integers of $\ks$ and we use the notation $\Oseen_\ks(\n,\e)$ with the obvious meaning.
Besides the trivial cases $\Oseen_\Q(\n,1)=\Z^\n$, the first asymptotic result can probably be found in Lang's book  \cite{Lang1983}. Lang states, without proof,  
\begin{equation*}
N(\Oseen_\ks(1,1),\cH)=\gamma_\ks \cH^\m  \left( \log \cH  \right)^q + O\left(    {\cH}^{\m} \left( \log  \mc{H}  \right)^{q-1} \right),
\end{equation*}
where $m=[\ks:\Q]$, $q$ is the rank of the unit group of $\Oseen_\ks$, and $\gamma_\ks$ and the implicit constant in the error term are unspecified positive constants, depending on $\ks$. More recently,  
Widmer \cite{Widmer2013} established the following asymptotic formula 
\begin{alignat}1\label{Okne}
N(\Oseen_\ks(\n,\e),\cH)=\sum_{i=0}^{t} D_i\cH^{\m\e\n}(\log \cH^{\m\e\n})^i+O_{m,e,n}(\cH^{\m\e\n-1}(\log \cH)^{t}),
\end{alignat}
provided $\e=1$ or $\n>\e+C_{\e,\m}$, for some explicit $C_{\e,\m}\leq 7$.
Here $t=\e(q+1)-1$, and the constants $D_i=D_i(\ks,\n,\e)$ are explicitly given. Our Theorem \ref{thmvect} generalizes Widmer's result in the case $\e=1$ to asymptotics for $N(\Oseen_S(\n,1),\cH)$. However, we do not obtain a multiterm expansion as in (\ref{Okne}).

Chern and Vaaler, in \cite{Chern2001}, proved an asymptotic formula for the 
number of monic polynomials in $\Z[X]$ of given degree and bounded Mahler measure. Theorem 6 of \cite{Chern2001} immediately implies the following estimate 
\begin{equation*}\label{chern}
N(\Oseen_\Q(1,\e),\cH)= C_{\e} \cH^{\e^2}+  O_e \left(    \mc{H}^{\e^2-1} \right),
\end{equation*}
for some explicit constant $C_{e}$. This was extended by the author in \cite{algint}, where an asymptotic estimate is given for $N(\Oseen_\ks(1,\e),\cH)$. Our Theorem \ref{thmint} generalizes this result and gives an asymptotic estimate for $N(\Oseen_S(1,\e),\cH)$ for any finite set of places $S$ containing the archimedean ones.


We write $S_\fin$ for the set of non-archimedean places of $S$. Suppose that $S_\fin=\{v_1 ,\dots ,v_\eL \}$ and that $v_\el$ corresponds to the prime ideal $\fp_\el$ of $\Oseen_\ks$. We indicate by $\fN(\A)$ the norm from $\ks$ to $\Q$ of the fractional ideal $\A$ and by $\fN(S)$ the $L$-tuple $(\fN(\fp_1) , \dots , \fN(\fp_L))$. Let $r$ and $s$ be, respectively, the number of real and pairs of conjugate complex embeddings of $\ks$. Moreover, we indicate by $\Delta_\ks$ the discriminant of $\ks$.
Let $\n$ be a positive integer, we set
\begin{equation}\label{const}
 B^{(\n)}_{\ks,S}= \frac{\n^{r+s-1}  2^{s\n} \m^{|S|-1}}{(|S|-1)!\left( \sqrt{|\Delta_\ks|}\right)^{\n}}  \prod_{\el=1}^\eL \left( \frac{1}{ \log \fN(\fp_l)} \left( 1-\frac{1}{\fN(\fp_l)^\n} \right)\right),
\end{equation}
and
$$
C_{\R,\n} = 2^{\n-M} \left( \prod_{j=1}^{M} \left( \frac{2j}{2j+1} \right)^{\n-2j} \right) \frac{\n^M}{M!},
$$
with $M=\lfloor \frac{\n-1}{2} \rfloor$ (as usual $\lfloor x \rfloor$ indicates the floor of $x\in \R$), and 
$$
C_{\C,\n} = \pi^\n \frac{\n^\n}{\left( \n! \right)^2}.
$$
In this article, as usual, empty products are understood to be 1. 

For non-negative real functions $f(X),g(X),h(X)$ and $X_0\in \R$,  we write 
$f(X)=g(X)+O(h(X))$ as $X\geq X_0$ tends to infinity, if there is $C_0$ such that $|f(X)-g(X)|\leq C_0 h(X)$ for all $X \geq X_0$.

\begin{theorem}\label{thmvect}
Let $\n$ be a positive integer and let $\ks$ be a number field of degree $\m$ over $\Q$. Moreover, let $S$ be a finite set of places of $\ks$ containing the archimedean ones. Then, as $\cH\geq 2$ tends to infinity,  
$$
N(\Oseen_S(\n,1),\cH) =  (2^r\pi^s)^\n B^{(\n)}_{\ks,S}\mc{H}^{\m\n} \left( \log  \mc{H} \right)^{|S|-1}  \\
+\left\lbrace
\begin{array}{ll}
O\left(    \mc{H}^{\m\n} \left( \log  \mc{H}  \right)^{|S|-2} \right),  & \mbox{if $|S|> 1$,}\\
O\left(\mc{H}^{\m\n-1}  \right), & \mbox{if $|S|= 1$.}
\end{array} \right.
$$
The implicit constant in the error term depends on $m$, $n$ and $\fN(S)$.
\end{theorem}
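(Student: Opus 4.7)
The plan is to reduce the problem to lattice-point counting in fractional $\Oseen_\ks$-ideals embedded in Minkowski space, after isolating the contribution to the height coming from the finite places in $S$. For $\bo{\alpha}\in\Oseen_S^\n\setminus\{\bo 0\}$, set $b_\el(\bo{\alpha})=\max\bigl(0,-\min_i\ord_{\fp_\el}(\alpha_i)\bigr)\in\Z_{\geq 0}$. Since $|\alpha_i|_v\leq 1$ for all $v\notin S$, the height factors as
\[
H(\bo{\alpha})^\m=\prod_{\el=1}^{\eL}\fN(\fp_\el)^{b_\el(\bo{\alpha})}\cdot H_\infty(\bo{\alpha})^\m, \qquad H_\infty(\bo{\alpha})^\m=\prod_{v\in S_\infty}\max_i(1,|\alpha_i|_v)^{n_v}.
\]
The count then decomposes as $N(\Oseen_S(\n,1),\cH)=\sum_{\bo b\in\Z_{\geq 0}^{\eL}}N_{\bo b}(\cH)$, where $N_{\bo b}$ counts $\bo\alpha\in\Oseen_S^\n$ with $b_\el(\bo\alpha)=b_\el$ for each $\el$ and $H_\infty(\bo\alpha)\leq T_{\bo b}:=\cH\prod_\el\fN(\fp_\el)^{-b_\el/\m}$. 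Only the finitely many $\bo b$ satisfying $\sum_\el b_\el\log\fN(\fp_\el)\leq \m\log\cH$ contribute.

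For a fixed $\bo b$, the condition $b_\el(\bo\alpha)\leq b_\el$ for all $\el$ is equivalent to $\bo\alpha\in\mathfrak{B}^\n$, where $\mathfrak{B}=\prod_\el\fp_\el^{-b_\el}$ is a fractional ideal whose $\n$-fold product embeds as a lattice of covolume $\sqrt{|\Delta_\ks|}^{\,\n}\fN(\mathfrak{B})^\n$ in $\R^{\m\n}$. To enforce exact equality $b_\el(\bo\alpha)=b_\el$ for each $\el$ with $b_\el>0$, I apply inclusion--exclusion over subsets $T\subseteq\{\el:b_\el>0\}$, replacing $\mathfrak{B}$ by $\mathfrak{B}\prod_{\el\in T}\fp_\el$. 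A Lipschitz-type lattice-point counting lemma (in the spirit of Widmer or Davenport) applied to each resulting sublattice in the Minkowski region
\[
R_T=\Bigl\{\bo x\in\R^{\m\n}:\prod_{v\in S_\infty}\max_i(1,|x_{i,v}|)^{n_v}\leq T^\m\Bigr\}
\]
then collapses the inclusion--exclusion sum into
\[
N_{\bo b}=\frac{\Vol(R_{T_{\bo b}})}{\sqrt{|\Delta_\ks|}^{\,\n}\fN(\mathfrak{B})^{\n}}\prod_{\el:b_\el>0}\Bigl(1-\fN(\fp_\el)^{-\n}\Bigr)+(\text{error}_{\bo b}).
\]

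A Schanuel-type volume computation expresses $\Vol(R_T)$ as an explicit constant (built from $C_{\R,\n}$, $C_{\C,\n}$, $\m$, $r$, $s$) times $T^{\m\n}(\log T)^{r+s-1}$, with error $O(T^{\m\n}(\log T)^{r+s-2})$. After substituting $T_{\bo b}^{\m\n}=\cH^{\m\n}\prod_\el\fN(\fp_\el)^{-\n b_\el}$ the factors $\fN(\fp_\el)^{\pm\n b_\el}$ cancel against $\fN(\mathfrak{B})^{-\n}$, so summing the main term reduces to evaluating $\sum_{\bo b}(\log T_{\bo b})^{r+s-1}\prod_{\el:b_\el>0}(1-\fN(\fp_\el)^{-\n})$, which is approximated by $\prod_\el(1-\fN(\fp_\el)^{-\n})$ times the Dirichlet-type integral
\[
\int_{\substack{\bo x\geq\bo 0\\ \sum_\el x_\el\tau_\el\leq X}}\Bigl(X-\sum_\el x_\el\tau_\el\Bigr)^{r+s-1}d\bo x=\frac{(r+s-1)!\,X^{|S|-1}}{(|S|-1)!\prod_\el\tau_\el},
\]
with $\tau_\el=(\log\fN(\fp_\el))/\m$ and $X=\log\cH$. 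This raises the exponent from $r+s-1$ to $\eL+r+s-1=|S|-1$, produces the factor $\m^{\eL}/\prod_\el\log\fN(\fp_\el)$, and assembles to $(2^r\pi^s)^\n B^{(\n)}_{\ks,S}\cH^{\m\n}(\log\cH)^{|S|-1}$.

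The main obstacle is the error bookkeeping. The per-$\bo b$ lattice-point error works out to $O(\cH^{\m\n-1}(\log\cH)^{r+s-1})$ uniformly in $\bo b$ (the shrinking successive minima of $\mathfrak{B}\prod_{\el\in T}\fp_\el$ are exactly compensated by the shrinking $T_{\bo b}$), and summing over the $\eL$-dimensional range of $\bo b$ contributes $O(\cH^{\m\n-1}(\log\cH)^{|S|-1})$, well below the stated bound. The dominant error comes from the sum-to-integral step: the discrepancy at the boundary faces $b_\el=0$ (where $\prod_{\el:b_\el>0}(1-\fN(\fp_\el)^{-\n})$ omits the $\el$-th factor while the integral retains it) yields the stated $O(\cH^{\m\n}(\log\cH)^{|S|-2})$ when $|S|>1$. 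When $|S|=1$ (so $\eL=0$) only a single term is present, there is no sum to approximate, and the classical Lipschitz estimate for lattice points in the Minkowski region of $\Oseen_\ks^\n$ gives the error $O(\cH^{\m\n-1})$.
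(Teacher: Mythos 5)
Your decomposition over denominator vectors $\bo b$, the inclusion--exclusion over subsets of $\{\el:b_\el>0\}$, and the per-$\bo b$ lattice-point count in $\Lambda_{\mathfrak{B}}$ are precisely the paper's sum over $\A\in\mc{I}_S$, its M\"obius inversion $|Z^*(\A,T)|=\sum_{\B\mid\A}\mu_\ks(\B)|Z(\A\B^{\star(-1)},T)|$, and its Lemma 6.3 for $\Lambda_{\A^{\star(-1)}}$ — the same argument in different dress. The one genuine methodological divergence is the final summation: the paper evaluates $\sum_\A\Psi^{(1)}(\A)\bigl(\log(\cH^\m/\fN(\A))\bigr)^{(q)}$ by induction on $|S_\fin|$ via Faulhaber's formula (Lemma 6.4), whereas you compare the sum to the $L$-dimensional simplex integral $\int\bigl(X-\sum_\el x_\el\tau_\el\bigr)^{r+s-1}d\bo x$. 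Both give the same leading constant and a discrepancy of order $(\log\cH)^{|S|-2}$; the integral version is conceptually cleaner but needs a separate sum-versus-integral lemma, which the inductive Faulhaber computation builds in automatically. (The paper also routes through a general theorem for $(r,s)$-systems so as to cover Theorem 1.2 with the Mahler measure; for Theorem 1.1 alone your direct max-norm treatment is a legitimate shortcut.)

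There is, however, a concrete flaw in your error accounting. You claim the per-$\bo b$ lattice discrepancy is $O(\cH^{\m\n-1}(\log\cH)^{r+s-1})$ uniformly, hence strictly below the stated error. That is not what a Davenport/Lipschitz bound for the Minkowski region gives when $r+s\geq 2$. The projections of $R_{T_{\bo b}}\subset\R^{\m\n}$ onto $(\m\n-1)$-dimensional coordinate subspaces have volume of order $T_{\bo b}^{\m\n}(\log T_{\bo b})^{r+s-2}$, not $T_{\bo b}^{\m\n-1}(\log T_{\bo b})^{r+s-1}$: already for $\ks$ real quadratic and $\n=1$ the set $\{(x_1,x_2):\max(1,|x_1|)\max(1,|x_2|)\leq T^2\}$ projects onto the full interval $[-T^2,T^2]$ in each coordinate, so the one-dimensional projection volumes are $\sim T^2=T^{\m\n}$. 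After dividing by $\lambda_1\cdots\lambda_j$, the correct per-$\bo b$ discrepancy is $O\bigl(\cH^{\m\n}(\log(\cH^\m/\fN(\A)))^{(r+s-2)}+\fN(\A)^\n\bigr)$ for $r+s\geq 2$, and $O(\cH^{\m\n-1}\fN(\A)^{1/\m})$ for $r+s=1$, exactly as in Lemma 6.3. For $\bo b=\bo 0$ this is already $\cH^{\m\n}(\log\cH)^{r+s-2}$, so the lattice discrepancy, the sub-leading terms of the exact volume polynomial, and the sum-versus-integral discretization all land at the \emph{same} order $\cH^{\m\n}(\log\cH)^{|S|-2}$; none of them is ``well below the stated bound,'' and the ``dominant error'' is not isolated in the sum-to-integral step. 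The final error you assert is nevertheless correct, since the corrected per-$\bo b$ bound still sums (over $\bo b\in\mc{I}_S(\cH^\m)$, via the same simplex count you already set up) to $O(\cH^{\m\n}(\log\cH)^{|S|-2})$; but your write-up needs the corrected per-$\bo b$ estimate. Getting $O(\cH^{\m\n-1}(\log\cH)^{r+s-1})$ per $\bo b$ would require a full multiterm expansion of the volume à la Widmer (2013), which neither you nor the paper carry out.
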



\begin{theorem}\label{thmint}
Let $\e$ be a positive integer and let $\ks$ be a number field of degree $\m$ over $\Q$. Moreover, let $S$ be a finite set of places of $\ks$ containing the archimedean ones. Then, as $\cH\geq 2$ tends to infinity, 
$$
N(\Oseen_S(1,\e),\cH) =  \e^{|S|} C_{\R,\e}^r C_{\C,\e}^s B^{(\e)}_{\ks,S} \mc{H}^{\m\e^2} \left( \log  \mc{H} \right)^{|S|-1} \\
+ \left\lbrace
\begin{array}{ll}
O\left(    \mc{H}^{\m\e^2} \left( \log  \mc{H}  \right)^{|S|-2} \right),  & \mbox{ if $|S|>1$, }\\
O\left(\mc{H}^{\e(\m\e-1)}  \mc{L} \right), & \mbox{ if $|S|=1$, }
\end{array} \right.
$$
where $\mc{L}=\log \cH$ if $(m,e)=(1,2)$ and 1 otherwise. The implicit constant in the error term depends on $m$, $e$ and $\fN(S)$.
\end{theorem}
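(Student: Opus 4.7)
The plan is to generalize the strategy of \cite{algint}, which treats the case $S = S_\infty$, by incorporating the non-archimedean places of $S$ in the manner of Theorem \ref{thmvect}. First I would pass from counting algebraic numbers $\alpha\in\Oseen_S(1,\e)$ to counting their monic minimal polynomials over $\ks$: since $\alpha$ is an $\overline{S}$-integer, all its $\ks$-conjugates are too, hence the coefficients of the minimal polynomial lie in $\Oseen_S$; conversely each monic irreducible $P\in\Oseen_S[X]$ of degree $\e$ contributes exactly $\e$ elements, all of the same height, to $\Oseen_S(1,\e)$. This reduces the problem to counting monic $P(X)=X^\e+a_{\e-1}X^{\e-1}+\cdots+a_0$ with $a_i\in\Oseen_S$ and $H(\alpha)\leq\cH$ for any root $\alpha$. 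A standard estimate will then show that the contribution of reducible polynomials is absorbed into the error term.

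Next I would express the height bound as an $S$-adic Mahler measure bound on the coefficients. The place-by-place decomposition of the absolute Weil height gives
\[
H(\alpha)^{\m\e}=\prod_v M_v(P)^{d_v},
\]
where $v$ ranges over the places of $\ks$, $d_v=[\ks_v:\Q_v]$, and $M_v(P)=\prod_{i=1}^\e\max(1,|\alpha_i|_v)$ is the $v$-adic Mahler measure. Since $\alpha$ is an $\overline{S}$-integer, $M_v(P)=1$ for $v\notin S$, so only places in $S$ contribute. At each archimedean $v$ the set of monic $P$ with $M_v(P)\leq T$ is a bounded subset of $\R^\e$ or $\R^{2\e}$ whose Euclidean volume is $C_{\R,\e}T^\e$ or $C_{\C,\e}T^{2\e}$ respectively, by the Chern--Vaaler computations of \cite{Chern2001} extended in \cite{algint}. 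At each $v_\el\in S_\fin$, monicity plus $S$-integrality translate into constraints on the $\fp_\el$-adic valuations of the coefficients, and summing over admissible valuation tuples yields the Euler-type factor $1-\fN(\fp_\el)^{-\e}$.

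The count proper is then a geometry-of-numbers argument parallel to the proof of Theorem \ref{thmvect}. Embedding $\Oseen_\ks^\e$ into the Minkowski space of $\ks$ (one copy per coefficient) and parametrising the $S$-unit action via the Dirichlet $S$-unit log map (whose image is the $(|S|-1)$-dimensional trace-zero hyperplane in $\R^{|S|}$), the main term becomes an integral on this hyperplane of the product of archimedean volumes subject to the constraint $\prod_{v\in S}M_v(P)^{d_v}\leq \cH^{\m\e}$. Evaluating this integral produces the factor $(\log\cH)^{|S|-1}/(|S|-1)!$ and, combined with the covolume $|\Delta_\ks|^{\e/2}$ of $\Oseen_\ks^\e$ and the normalisation factor $\m^{|S|-1}$ of the log map, reproduces exactly the stated leading constant $\e^{|S|}C_{\R,\e}^r C_{\C,\e}^s B^{(\e)}_{\ks,S}$.

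The main obstacle will be handling the archimedean region carefully: unlike the essentially cubical regions appearing in Theorem \ref{thmvect}, the Chern--Vaaler regions are genuinely curved, so one must verify that their boundaries are Lipschitz parametrisable with controlled constants in order to obtain the claimed error term from a standard lattice-point counting principle. The delicate case is $|S|=1$, where the logarithmic factor disappears and the extra $\mc{L}$ (with the degenerate case $(\m,\e)=(1,2)$) appears for the same boundary-degeneracy reasons observed by Chern and Vaaler in \cite{Chern2001}; it will require a separate analysis near the boundary of the Mahler measure region. A secondary issue will be making the estimate on reducible polynomials, and on polynomials sharing a common $\fp_\el$-factor, uniform in the position on the $S$-unit log hyperplane.
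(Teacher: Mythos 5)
Your first two paragraphs track the paper closely: pass to monic irreducible polynomials in $\Oseen_S[X]$, replace the height bound by the bound $M^\ks(f)\leq\cH^\e$ on the product over $v\in S$ of local Mahler measures, use the Chern--Vaaler volumes at the archimedean places, and absorb reducible polynomials into the error term. The third paragraph, however, rests on a mechanism that is not available here. There is no $S$-unit action producing $\Oseen_S^\e$ from the lattice $\Oseen_\ks^\e$: $\Oseen_S$ is the localization of $\Oseen_\ks$ at $S_\fin$, i.e.\ the increasing union $\bigcup_{\A}\A^{\star(-1)}$ over integral ideals $\A$ supported on $\{\fp_1,\ldots,\fp_\eL\}$, not a union of unit-translates of $\Oseen_\ks$. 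Consequently the picture of ``integrating over the $(|S|-1)$-dimensional trace-zero hyperplane of the Dirichlet $S$-unit log map'' does not apply, and would not reproduce the main term. What the paper actually does is count, for each such $\A$, the lattice points of $\Lambda_{\A^{\star(-1)}}$ in the semialgebraic region $Z(T)$, use M\"obius inversion over divisors of $\A$ to pick out the $\bo{a}$ whose denominator is exactly $\A$, and then sum over all $\A$ with $\fN(\A)\leq\cH^\m$. The exponent $|S|-1$ of $\log\cH$ arises from two separate sources that multiply: $(\log\cH)^{r+s-1}$ from the volume of the archimedean product constraint (Lemma \ref{lemvol}), and $(\log\cH)^{L}$ from the sum over ideal norms (Lemma \ref{lemlog}). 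Your proposal does not contain either the ideal decomposition, the M\"obius inversion, or the lemma controlling the sum over norms, and these are the core of the argument once $S_\fin\neq\emptyset$.

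Two smaller misattributions. First, the extra factor $\mc{L}=\log\cH$ in the case $|S|=1$, $(\m,\e)=(1,2)$ is not a boundary-degeneracy effect in the Chern--Vaaler region; it comes from bounding the number of \emph{reducible} monic polynomials. When $\e$ is even and one factors $f=gh$ with $\deg g=\deg h=\e/2$, the count of such $f$ with $M^\ks(f)\leq\cH$ is of order $\cH^{\m\e/2}(\log\cH)^{2|S|-1}$, and only for $(\m,\e)=(1,2)$, $|S|=1$ does this exceed the lattice-point error $\cH^{\m\e-1}$. Second, the counting principle underlying the error term is the semialgebraic/o-minimal theorem of Barroero--Widmer, which requires showing via Tarski--Seidenberg that the Mahler measure is a semialgebraic function of the coefficients; a Lipschitz-parametrization argument as you suggest is a plausible alternative, but it is not what controls the error here, and you would still need to make it uniform in the lattice $\Lambda_{\A^{\star(-1)}}$ using Lemma \ref{lemdet} on successive minima.
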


As mentioned before, if $S=S_\infty$, then Theorem \ref{thmvect} reduces to (\ref{Okne}), although with a larger error term, and Theorem \ref{thmint} to the result in \cite{algint}. However, for the case $S_\infty \neq S$ the results appear to be new.

As in \cite{algint}, our proof relies on a work of the author and Widmer \cite{Barroero2012} about counting lattice points in definable sets in o-minimal structures. Our approach is similar to the one in \cite{algint}, but in the case $S=S_\infty$ the result is more straightforward, because the embedding of $\Oseen_\ks$ in $\R^m$ is a lattice. On the other hand, if $S\supsetneq S_\infty$, the embedding of $\Oseen_S$ is dense in $\R^m$, and a more elaborate proof is needed. 

Let us apply our theorems in a few simple examples. Fix a prime number $p$. One can see, as an easy exercise and as a special case of both theorems, that the number of elements of $\Z\left[\frac{1}{p}\right]$ of height at most $\cH$ is
$$
\frac{2}{\log p} \left(1-\frac{1}{p} \right)\cH \log \cH +O_p(\cH).
$$
Now, let $d$ be a square-free positive integer with $d \equiv 3 \mod 4$. Consider $\ks=\Q(\sqrt{d})$ and set $S$ to consist of the place corresponding to the prime ideal $(2,1+\sqrt{d})$, in addition to the two archimedean places. Then 
$$
N(\Oseen_S(\n, 1),\cH)=\frac{2n(2^n-1)}{ d^{\frac{n}{2}} \log 2 } \cH^{2\n}\left( \log \cH \right)^2 + O_n\left( \cH^{2n} \log \cH \right).
$$
Now consider $\ks=\Q$ again and suppose the non-archimedean places in $S$ are associated to the primes 2 and 3. Then
$$
N(\Oseen_S(1, 2),\cH)=\frac{32}{3 \log 2 \log 3}\cH^4 \left( \log \cH \right)^2 + O\left( \cH^{4} \log \cH \right).
$$

In \cite{Masser2007}, Masser and Vaaler observed that the limit for $
\cH\rightarrow \infty$ of
$$
\frac{N\left(\ks(1,\e),\cH^{\frac{1}{\e}}\right)}{N(\ks(\e,1),\cH)}
$$ 
is a rational number. Moreover, they asked if this can be extended to some sort of reciprocity law, i.e., whether
$$
\lim_{\cH \rightarrow \infty}\frac{N\left(\ks(\n,\e),\cH^{\frac{1}{\e}}\right)}{N\left(\ks(\e,\n),\cH^{\frac{1}{\n}}\right)}\in \Q.
$$
Analogously we notice that
$$
\lim_{\cH \rightarrow \infty} \frac{N\left(\Oseen_S(1,\e),\cH^{\frac{1}{\e}}\right)}{N(\Oseen_S(\e,1),\cH)}=\e\left(\frac{C_{\R,\e}}{2^\e}\right)^r \left(\frac{C_{\C,\e}}{\pi^\e}\right)^s
$$
is a rational number depending only on $e$, $r$ and $s$, as already pointed out in \cite{algint} for the case $S=S_\infty$. As Masser and Vaaler did, one can ask again whether 
$$
\lim_{\cH \rightarrow \infty}\frac{N\left(\Oseen_S(\n,\e),\cH^{\frac{1}{\e}}\right)}{N\left(\Oseen_S(\e,\n),\cH^{\frac{1}{\n}}\right)}\in \Q.
$$

\section{Preliminaries} \label{prelim}

Let $\ks$ be a number field of degree $\m$ over $\Q$ and let $M_\ks$ be the set of places of $\ks$. For $v\in M_\ks$, we indicate by $\ks_v$ the completion of $\ks$ with respect to $v$. We write $\Q_v$ for the completion of $\Q$ with respect to the unique place of $\Q$ that lies below $v$. Moreover, we set $d_v=[\ks_v:\Q_v]$ to be the local degree of $\ks$ at $v$. 

Any $v \in M_\ks$ corresponds either to a non-zero prime ideal $\fp_v$ of $\Oseen_\ks$ or to an embedding of $\ks$ into $\C$. In the first case $v$ is called a finite or non-archimedean place and we write $v\nmid \infty$. In the second case $v$ is called an infinite or archimedean place and we write $v\mid \infty$. We set, for $v\nmid \infty$,
$$
|\alpha|_v=\fN(\fp_v)^{-\frac{\ord_{\fp_v}(\alpha)}{d_v}},
$$
for every $\alpha \in \ks\setminus \{0\}$, where $\ord_{\fp_v}(\alpha)$ is the power of $\fp_v$ in the factorization of the principal fractional ideal $\alpha \Oseen_\ks$. Furthermore, $|0|_v=0$.
If $v \mid \infty$ corresponds to $\sigma_v :\ks \hookrightarrow \C$, we set
$$
|\alpha|_v = |\sigma_v(\alpha)|,
$$
for every $\alpha \in \ks$, where $|\cdot|$ is the usual absolute value on $\C$. The absolute multiplicative Weil height $H:\ks^n \rightarrow [1,\infty)$ is defined by
\begin{equation}\label{defH}
H(\alpha_1, \dots ,\alpha_n)=\prod_{v \in M_\ks} \max \{ 1, |\alpha_1|_v, \dots ,|\alpha_n|_v \}^{\frac{d_v}{\m}}.
\end{equation}
Note that for $\alpha \in \ks\setminus \{0\}$, $|\alpha|_v \neq 1$ for finitely many $v$. Therefore, the above product contains only finitely many terms different from 1.
Moreover, this definition is independent of the field containing the coordinates, and therefore the height is defined on $\overline{\Q}^n$. For properties of the Weil height we refer to the first chapter of \cite{BombGub}.

We conclude this section introducing semialgebraic sets and stating the Tarski-Seidenberg principle.

\begin{definition}
Let $N$ and $M_i$, for $i=1,\dots ,N$, be positive integers. A \emph{semialgebraic subset} of $\R^n$ is a set of the form
$$
\bigcup_{i=1}^{N}\bigcap_{j=1}^{M_i} \{  \x \in \R^n : f_{i,j}(\x) \ast_{i,j} 0 \},
$$
where $f_{i,j} \in \R[X_1, \dots , X_n]$ and the $\ast_{i,j}$ are either $<$ or $=$.

Let $A\subseteq \R^n$ be a semialgebraic set, a function $f:A\rightarrow \R^{n'}$ is called semialgebraic if its graph $\Gamma(f)$ is a semialgebraic set of $\R^{n+n'}$.
\end{definition}

If we identify $\C$ with $\R^2$, then the definitions of semialgebraic set and function are extended to subsets of $\C^n$ and to functions of complex variables in a natural way.
We will need the following theorem which is usually known as the Tarski-Seidenberg principle.

\begin{theorem}[\cite{Bierstone88}, Theorem 1.5]\label{tarski}
Let $A \in \R^{n+1}$ be a semialgebraic set, then $\pi(A)\in \R^n$ is semialgebraic, where $\pi :\R^{n+1}\rightarrow \R^n$ is the projection map on the first $n$ coordinates. 
\end{theorem}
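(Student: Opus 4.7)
The plan is to prove the projection theorem by reducing the elimination of one real variable to the classical quantifier-elimination problem for univariate polynomials, which is handled by Sturm's theorem (or by the subresultant calculus).

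First I would reduce to the case where $A$ is basic. Writing $A=\bigcup_{i=1}^N A_i$ with each $A_i$ defined by a finite conjunction of conditions $f_{i,j}(\x,y)\ast_{i,j} 0$, and using $\pi(A)=\bigcup_{i=1}^N \pi(A_i)$, it suffices to assume that $A$ itself has the form
$$
A=\{(\x,y)\in\R^{n+1}: p_1(\x,y)\,\sigma_1\, 0,\ldots, p_r(\x,y)\,\sigma_r\, 0\},
$$
where each $\sigma_j\in\{=,<,>\}$ and each $p_j\in\R[X_1,\ldots,X_n,Y]$. Thus it is enough to show that the set of $\x\in\R^n$ for which there exists $y\in\R$ realising a prescribed sign pattern on $p_1(\x,Y),\ldots,p_r(\x,Y)$ is semialgebraic.

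The heart of the argument is the following parametric statement: given polynomials $q_1,\ldots,q_r\in\R[T_1,\ldots,T_d][Y]$ whose coefficients are treated as parameters $\bo{t}\in\R^d$, the set of $\bo{t}$ for which some $y\in\R$ satisfies a fixed sign pattern on $q_1(\bo{t},y),\ldots,q_r(\bo{t},y)$ is semialgebraic in $\bo{t}$. This is the classical univariate quantifier-elimination statement. One proof constructs the Sturm sequence of each $q_j(\bo{t},Y)$ and computes, from its leading coefficients and the sign variations at $\pm\infty$, the number of real roots in each interval determined by the roots of the other $q_i$; these counts are polynomial data in the coefficients of the $q_j$, so existence of a real $y$ with the prescribed signs becomes a Boolean combination of polynomial (in)equalities in $\bo{t}$. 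Specialising $q_j=p_j$ with $\bo{t}$ replaced by the tuple of coefficients of $p_j$ as an element of $\R[X_1,\ldots,X_n][Y]$, one obtains a Boolean combination of polynomial conditions in $\x$ describing $\pi(A)$, which is therefore semialgebraic.

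The main obstacle is uniformity in the parameters: the $Y$-degree of $p_j(\x,Y)$ can drop on the algebraic locus where a leading coefficient vanishes, and the Sturm sequence likewise fragments on such loci. The standard remedy is to proceed by induction on the total $Y$-degree: partition $\R^n$ into finitely many semialgebraic pieces according to which of the leading coefficients of the $p_j$ and of their successive pseudo-remainders vanish. On each piece the sign analysis is controlled by polynomials of strictly smaller $Y$-degree, so the induction terminates, and gluing the semialgebraic descriptions over the finitely many cases yields the desired semialgebraic description of $\pi(A)$.
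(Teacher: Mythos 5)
The paper does not prove this statement at all: Theorem~\ref{tarski} is the Tarski--Seidenberg principle, and the paper simply cites it as Theorem~1.5 of \cite{Bierstone88} and uses it as a black box. So there is no in-paper proof to compare your proposal against; what you have written is a sketch of a proof of the cited theorem itself.

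Taken on its own terms, your sketch follows the classical Sturm/subresultant route to quantifier elimination over the reals, and the overall architecture is sound: reduce to a single basic conjunction, then eliminate the last variable $y$ by showing that the realizability of a prescribed sign condition on $p_1(\x,Y),\dots,p_r(\x,Y)$ is a semialgebraic condition on $\x$, and finally handle degree drops of leading coefficients by stratifying $\R^n$ and inducting on $Y$-degree. One place where the wording is looser than the argument it gestures at: you describe computing ``the number of real roots in each interval determined by the roots of the other $q_i$.'' The roots of $q_i$ are not themselves semialgebraic functions of the parameters, so one cannot literally count roots in intervals whose endpoints are those roots. The standard fix is to phrase everything in terms of Tarski queries (signed counts $\sum_{y:\,p(y)=0}\operatorname{sign}\bigl(q(y)^{\varepsilon}\bigr)$ for $\varepsilon\in\{0,1,2\}$), obtained from Sturm sequences of $p$ against $q$; the Ben-Or--Kozen--Reif sign-determination step then recovers, from these queries, the list of sign vectors of $(q_1,\dots,q_r)$ realized on the roots of a suitable product polynomial and on the complementary intervals, all as polynomial conditions in the parameters. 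With that substitution your outline is a correct account of one standard proof; the subresultant formulation you mention parenthetically is indeed the cleaner way to make the degree-drop induction uniform, since subresultant coefficients specialize without the pseudo-remainder bookkeeping.
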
 

\section{A generalization}

In this section we formulate a theorem which will be used later to derive Theorems \ref{thmvect} and \ref{thmint}.

In the following definition we consider functions whose domain is $\R^{n+1}$ or $\C^{n+1}$. We use the notation $\z$ to indicate a vector with entries in a generic field, while $\x$ will be a vector with real coordinates. We are often going to identify a function $f:\C^n\rightarrow \R$ with $f:\R^{2n}\rightarrow \R$, where, if $\x=(x_1, \dots, x_{2n})\in \R^{2n}$, $f(\x)=f(x_1+ix_2,\ldots , x_{2n-1}+ix_{2n})$.

\begin{definition}\label{defdist}
Let $n $ be a positive integer. A \emph{semialgebraic distance function} (of dimension $n$) is a continuous function $N$ from $\R^{n+1}$ or $\C^{n+1}$ to the interval $[0,\infty)$ satisfying the following conditions:
\begin{enumerate}[i.]
\item \label{cond1} $N(\z)=0$ if and only if $\z$ is the zero vector;
\item \label{cond2} $N(w \z)=|w|N(\z)$ for any scalar $w$ in $\R$ or in $\C$;
\item \label{semialghyp} $N$ is a semialgebraic function.
\end{enumerate}

\end{definition}

Let $r$ and $s$ be non-negative integers, not both zero. A system $\mathcal{N}$ of $r$ real and $s$ complex semialgebraic distance functions (of dimension $n$) is called $(r,s)$-system (of dimension $n$).

Let us fix a number field $\ks$ with $[\ks:\Q]=m$. Let $r$ and $s$ be, respectively, the number of real and pairs of conjugate complex embeddings of $\ks$. These induce $r+s$ archimedean places of $\ks$, with respective completions $\R$ or $\C$. Given an $(r,s)$-system $\mathcal{N}$ of dimension $n$, we can associate to every archimedean place $v$ a semialgebraic distance function $N_v$ on $\ks_v^{n+1}$. We will mostly use the alternative notation $N_1,\ldots ,N_r$ for the $r$ real distance functions and $N_{r+1},\ldots ,N_{r+s}$ for the $s$ complex ones and we put $d_i=1$, for $i=1,\dots ,r$, and $d_i=2$ for $i=r+1,\dots ,r+s$.
For the non-archimedean places we set
$$
N_v(\z)=\max \left\lbrace |z_0|_v, \ldots , |z_n|_v  \right\rbrace ,
$$
for $\z = (z_0, \dots, z_n) \in \ks_v^{n+1}$.
Now we can define, for $\bo{\alpha} \in \ks^{n+1}$, a height function associated to $\mathcal{N}$,
\begin{equation*}\label{defheight}
H_\mathcal{N} (\bo{\alpha})^\m=\prod_{v \in M_\ks } N_v(\sigma_v(\bo{\alpha}))^{d_v},
\end{equation*}
where $\sigma_v$ is the embedding of $\ks$ into $\ks_v$ corresponding to $v$, extended componentwise to $\ks^{n+1}$.

Now,  let $\Oseen_S^{\mc{N}}(\cH)$ be the set of $\bo{a} \in \Oseen_S^n$ with $H_\mathcal{N} (1,\bo{a}) \leq \mc{H}$. We are interested in obtaining an estimate for $|\Oseen_S^{\mc{N}}(\cH)|$ as $\mc{H}\rightarrow \infty$.

Let us introduce some notation and impose some conditions on the functions $N_i$ in view of the application of this estimate.
For $i=1, \dots ,r+s$, we set $\tN_i(\z)=N_i(1,\z)$ and suppose that
\begin{equation}\label{hyp1}
\tN_i(\z)\geq 1,
\end{equation}
for every $\z \in \R^n$ or $\C^n$. We define the sets
\begin{equation}\label{sets}
Z_i(T)=\lg \z:  \tN_i(\z)\leq T \rg,
\end{equation}
and suppose that 
\begin{equation}\label{hyp2}
\text{the $Z_i(T)$ have volume $p_i(T)$ for every $T\geq 1$} 
\end{equation}
where $p_i(X)\in \R[X]$ is a polynomial of degree $d_i n$ and leading coefficient $C_i$. Moreover, let 
\begin{equation}\label{constmaint}
C_{\cN,\ks,S}=
\frac{n^{r+s-1}  2^{sn} \m^{|S|-1}}{(|S|-1)!\left( \sqrt{|\Delta_\ks|}\right)^{n}} \left( \prod_{i=1}^{r+s} C_i \right) \prod_{\el=1}^\eL \left(\frac{1}{\log \fN(\fp_l)} \left( 1-\frac{1}{\fN(\fp_l)^\n} \right)\right).
\end{equation}

\begin{theorem}\label{mainthm}
 Let $\mc{N}$ be an $(r,s)$-system of dimension $n$, satisfying the above hypothesis \eqref{hyp1} and \eqref{hyp2}. Moreover, suppose $S$ is a finite set of places of $\ks$ containing the archimedean ones. Then, for every $\cH_0>1$ there exists a positive $C_0=C_0(\mc{N},\fN(S),\cH_0)$, such that for every $\cH\geq \cH_0$
\begin{equation*}\label{asymptform}
\left||\Oseen_S^{\mc{N}}(\cH)|- C_{\cN,\ks,S}\mc{H}^{\m n} \left( \log  \mc{H} \right)^{|S|-1} \right| \leq \left\lbrace
\begin{array}{ll}
C_0   \mc{H}^{m n} \left( \log  \mc{H}  \right)^{|S|-2},  & \mbox{if $|S|>1$,}\\
C_0 \mc{H}^{\m n-1}, & \mbox{if $|S|=1$.}
\end{array} \right.
\end{equation*}
\end{theorem}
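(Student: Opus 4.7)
The plan is to split $\mathcal O_S^n$ according to the ``denominator pattern'' at the finite places in $S$ and, for each pattern, reduce the count to a lattice-point problem in a semialgebraic region of $\prod_i k_{v_i}^n\simeq\R^{mn}$. For $\boldsymbol a\in\mathcal O_S^n$ and $\ell=1,\ldots,L$, set $u_\ell(\boldsymbol a)=\max\{0,-\min_i\ord_{\mathfrak p_\ell}(a_i)\}$. Since $|a_i|_v\leq 1$ for every $v\notin S$, the definition \eqref{defH} collapses to
$$H_{\mathcal N}(1,\boldsymbol a)^{m}=\prod_{i=1}^{r+s}\tilde N_i\bigl(\sigma_i(\boldsymbol a)\bigr)^{d_i}\cdot\prod_{\ell=1}^{L}\mathfrak N(\mathfrak p_\ell)^{u_\ell(\boldsymbol a)},$$
and $\{\boldsymbol a\in\mathcal O_S^n:u_\ell(\boldsymbol a)\leq u_\ell\,\forall\ell\}=\mathfrak a_{\boldsymbol u}^n$, where $\mathfrak a_{\boldsymbol u}=\prod_\ell\mathfrak p_\ell^{-u_\ell}$ is a fractional $\mathcal O_k$-ideal embedded via Minkowski as a lattice of covolume $\bigl(2^{-s}\sqrt{|\Delta_k|}\bigr)^n\prod_\ell\mathfrak N(\mathfrak p_\ell)^{-nu_\ell}$. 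An inclusion--exclusion on the equalities $u_\ell(\boldsymbol a)=u_\ell$, followed by the shift $\boldsymbol u\mapsto\boldsymbol u+\sum_{\ell\in J}\boldsymbol e_\ell$, rewrites the total as
$$|\mathcal O_S^{\mathcal N}(\mathcal H)|=\sum_{J\subseteq\{1,\ldots,L\}}(-1)^{|J|}\sum_{\boldsymbol u\geq 0}\bigl|\mathfrak a_{\boldsymbol u}^n\cap R(T_J(\boldsymbol u))\bigr|,$$
where $R(T)=\{\boldsymbol z:\prod_i\tilde N_i(\boldsymbol z_i)^{d_i}\leq T\}$ and $T_J(\boldsymbol u)=\mathcal H^{m}\prod_\ell\mathfrak N(\mathfrak p_\ell)^{-u_\ell}\prod_{\ell\in J}\mathfrak N(\mathfrak p_\ell)^{-1}$.

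Since each $\tilde N_i$ is semialgebraic, Theorem~\ref{tarski} makes $R(T)$ a definable family in the o-minimal structure of semialgebraic sets, and the lattice-point counting theorem of \cite{Barroero2012} gives
$$\bigl|\mathfrak a_{\boldsymbol u}^n\cap R(T_J(\boldsymbol u))\bigr|=\frac{\Vol\bigl(R(T_J(\boldsymbol u))\bigr)}{\mathrm{covol}(\mathfrak a_{\boldsymbol u}^n)}+E_J(\boldsymbol u,\mathcal H),$$
with $E_J$ controlled by boundary volumes of $R$ and the successive minima of $\mathfrak a_{\boldsymbol u}^n$. The volume I would evaluate by iterating in the radii $T_i=\tilde N_i$: using $dp_i(T_i)/dT_i=C_id_inT_i^{d_in-1}+\text{(lower)}$, substituting $u_i=T_i^{d_in}$, and invoking the classical $\Vol\{u\in[1,\infty)^{r+s}:\prod u_i\leq V\}\sim V(\log V)^{r+s-1}/(r+s-1)!$, one gets
$$\Vol(R(T))=\Bigl(\prod_{i=1}^{r+s}C_i\Bigr)\frac{n^{r+s-1}}{(r+s-1)!}T^{n}(\log T)^{r+s-1}+O\bigl(T^{n}(\log T)^{r+s-2}\bigr).$$
Dividing by the covolume cancels the $\prod_\ell\mathfrak N(\mathfrak p_\ell)^{\pm nu_\ell}$; the $J$-alternation collapses to $\sum_J(-1)^{|J|}\prod_{\ell\in J}\mathfrak N(\mathfrak p_\ell)^{-n}=\prod_\ell\bigl(1-\mathfrak N(\mathfrak p_\ell)^{-n}\bigr)$; and the $\boldsymbol u$-sum of $(\log T_J(\boldsymbol u))^{r+s-1}$ is approximated by the Dirichlet-simplex integral
$$\frac{1}{\prod_\ell\log\mathfrak N(\mathfrak p_\ell)}\int_{\sum x_\ell\leq m\log\mathcal H}\bigl(m\log\mathcal H-\sum x_\ell\bigr)^{r+s-1}d^Lx=\frac{(m\log\mathcal H)^{|S|-1}(r+s-1)!}{(|S|-1)!\prod_\ell\log\mathfrak N(\mathfrak p_\ell)},$$
with Riemann-sum remainder $O((\log\mathcal H)^{|S|-2})$. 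Collecting constants reproduces \eqref{constmaint} and delivers the main term $C_{\mathcal N,k,S}\mathcal H^{mn}(\log\mathcal H)^{|S|-1}$.

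I expect the main obstacle to be the uniform control of $E_J(\boldsymbol u,\mathcal H)$ across $\boldsymbol u$. Because $\mathfrak N(\mathfrak a_{\boldsymbol u})\to 0$ as the $u_\ell$ grow, the successive minima of $\mathfrak a_{\boldsymbol u}^n$ degenerate and the raw \cite{Barroero2012} bound is not summable in the form stated. My remedy is to partition the $\boldsymbol u$ according to the class of $\mathfrak a_{\boldsymbol u}$ in the finite subgroup $\langle[\mathfrak p_1],\ldots,[\mathfrak p_L]\rangle\subseteq\mathrm{Cl}(\mathcal O_k)$: fixing representatives $\mathfrak c_1,\ldots,\mathfrak c_h$ and writing $\mathfrak a_{\boldsymbol u}=\beta_{\boldsymbol u}\mathfrak c_{j(\boldsymbol u)}$ with $\beta_{\boldsymbol u}\in k^*$, the substitution $\boldsymbol a=\beta_{\boldsymbol u}\boldsymbol b$ reduces each count to one of finitely many fixed lattices $\mathfrak c_j^n$ inside the rescaled semialgebraic region $\beta_{\boldsymbol u}^{-1}R(T_J(\boldsymbol u))$, which remains a definable family in the parameter $\sigma(\beta_{\boldsymbol u})\in\R^m$; this makes the $E_J$ uniformly bounded, and summing over $\boldsymbol u$ yields the announced $O(\mathcal H^{mn}(\log\mathcal H)^{|S|-2})$. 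In the case $|S|=1$ the decomposition collapses to the single count of $\mathcal O_k^n\cap R(\mathcal H^m)$, for which \cite{Barroero2012} directly supplies the standard $O(\mathcal H^{mn-1})$ error.
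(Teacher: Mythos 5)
Your decomposition is the paper's one in disguise: the parameter $\boldsymbol u$ records the exponent vector of the ideal $\mathfrak A=\mathfrak p_1^{\star(g_1)}\cdots\mathfrak p_L^{\star(g_L)}\in\mathcal I_S$, and the inclusion--exclusion over $J\subseteq\{1,\dots,L\}$ followed by the shift $\boldsymbol u\mapsto\boldsymbol u+\boldsymbol e_J$ is exactly the M\"obius inversion $|Z^*(\mathfrak A,T)|=\sum_{\mathfrak B\mid\mathfrak A}\mu_k(\mathfrak B)|Z(\mathfrak A\mathfrak B^{\star(-1)},T)|$ of the paper's Section~6. The volume asymptotic for $R(T)$ and the Dirichlet-simplex approximation of the $\boldsymbol u$-sum both match Lemmas~\ref{lemvol} and~\ref{lemlog}, and you recover $C_{\mathcal N,k,S}$ correctly. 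So the main-term analysis is sound and is essentially the same route as the paper's.

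The genuine issue is in the last paragraph. You diagnose a nonexistent obstacle and then propose an unnecessary (and underspecified) detour. The successive minima of $\Lambda_{\mathfrak a_{\boldsymbol u}}$ do degenerate as $\boldsymbol u$ grows, but the region $R(T_J(\boldsymbol u))$ shrinks at \emph{exactly the compensating rate}: by Lemma~\ref{lemdet} one has $\lambda_1(\Lambda_{\mathfrak a_{\boldsymbol u}})\geq\mathfrak N(\mathfrak a_{\boldsymbol u})^{1/m}$, so $\mathfrak D(\Lambda_{\mathfrak a_{\boldsymbol u}})\lesssim\mathfrak N(\mathfrak a_{\boldsymbol u})^{-n}=\prod_\ell\mathfrak N(\mathfrak p_\ell)^{nu_\ell}$, while $T_J(\boldsymbol u)^n=\mathcal H^{mn}\mathfrak N(\mathfrak a_{\boldsymbol u})^n\prod_{\ell\in J}\mathfrak N(\mathfrak p_\ell)^{-n}$. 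Hence $\mathfrak D(\Lambda_{\mathfrak a_{\boldsymbol u}})\,T_J(\boldsymbol u)^n\lesssim\mathcal H^{mn}\prod_{\ell\in J}\mathfrak N(\mathfrak p_\ell)^{-n}$, \emph{independent of $\boldsymbol u$}. Combined with Lemma~\ref{lemvolZ'}, which shows $V_j(Z'(T))$ is a polynomial in $T^{1/2}$ and $\log T$ with vanishing top $X^{2n}Y^q$-coefficient, the raw bound from Theorem~\ref{counttheorem} is summable over the $O((\log\mathcal H)^L)$ admissible $\boldsymbol u$ and gives $O(\mathcal H^{mn}(\log\mathcal H)^{|S|-2})$ directly (Lemma~\ref{lemZA} and the display \eqref{mainterm}). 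Your ideal-class workaround is therefore not needed; worse, as written it has its own gap: the generator $\beta_{\boldsymbol u}$ of $\mathfrak a_{\boldsymbol u}\mathfrak c_{j(\boldsymbol u)}^{\star(-1)}$ is determined only up to units of $\mathcal O_k$, and unless one selects it with the archimedean absolute values $|\sigma_i(\beta_{\boldsymbol u})|$ balanced (e.g.\ by reducing into a fundamental domain for the unit action), the rescaled region $\beta_{\boldsymbol u}^{-1}R(T_J(\boldsymbol u))$ can have coordinate projections of volume blowing up in $\boldsymbol u$, and the uniform control of the $E_J$ you claim would fail. Drop that step, keep the bound $\lambda_1\geq\mathfrak N(\mathfrak a_{\boldsymbol u})^{1/m}$, and the argument closes as in the paper.
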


\section{Proof of Theorems \ref{thmvect} and \ref{thmint}}

In this section we apply Theorem \ref{mainthm} to prove Theorems \ref{thmvect} and \ref{thmint}. Let us start with the first one. We choose our system $\cN$ to consist of the max norm
$$
N_v(\z)=|\z|_\infty=\max \left\lbrace |z_0|, \ldots , |z_n| \right\rbrace ,
$$
for every archimedean place $v$ of $\ks$. These $N_v$ clearly satisfy the definition of semialgebraic distance function. The sets $Z_i(T)$ defined in (\ref{sets}) have volume $(2T)^n$ for $i=1,\dots ,r$ and $\pi^n T^{2n}$ for $i=r+1,\dots ,r+s$, for every $T \geq 1$. Therefore, the hypotheses of Theorem \ref{mainthm} are satisfied.

Note that, for every $\bo{a} \in \ks^n$,
$$
H_\mathcal{N} (1,\bo{a})=\prod_{v } N_v(1,\sigma_v(\bo{a}))^{\frac{d_v}{\m}}=\prod_{v } \max \left\lbrace 1, |a_1|_v, \ldots , |a_n|_v \right\rbrace^{\frac{d_v}{\m}}=H(\bo{a}).
$$
Therefore $H_\cN$ is the usual absolute Weil height defined in (\ref{defH}).
The claim of Theorem \ref{thmvect} follows applying Theorem \ref{mainthm} with $\mc{H}_0=2$.\\

Now let us prove Theorem \ref{thmint}. We choose  $\cN$ to consist of the Mahler measure function:
\begin{equation*}\label{defNn}
N_i(z_0, \dots ,z_n)= M(z_0X^n+z_1X^{n-1}+\cdots +z_n)=M(z_0,\dots, z_n),
\end{equation*}
for every $i=1,\dots ,r+s$.
Let us recall its definition. If $f=z_0X^d+z_1X^{d-1}+\cdots +z_d$ is a non-zero polynomial of degree $d$ with complex coefficients and roots $\alpha_1,\ldots , \alpha_d$, the Mahler measure of $f$ is defined to be:
\begin{equation}\label{defMahler}
M(f)=|z_0| \prod_{h=1}^d \max\lg 1,|\alpha_h|\rg.
\end{equation}
Moreover, we set $M(0)=0$.

In what follows we are going to consider the Mahler measure as a function of the coefficients of a polynomial:
$$
\begin{array}{cccc}
M:& \R^{d+1} \text{ or }\C^{d+1} &\rightarrow & [0,\infty) \\
  & (z_0, \dots ,z_{d})& \mapsto & M\left( z_0X^d + \cdots +x_{d} \right).
\end{array}
$$

Mahler (\cite{Mahler1961}, Lemma 1) proved that such an $M$ is continuous and it is easy to see that it satisfies conditions \ref{cond1}. and \ref{cond2}. of Definition \ref{defdist}. We now prove that it is a semialgebraic function.

\begin{lemma}
The Mahler measure $M$, as a function of the coefficients of a polynomial, is a semialgebraic function. 
\end{lemma}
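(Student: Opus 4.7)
The plan is to show that the graph $\Gamma(M) \subseteq \R^{d+2}$ (real case) or $\R^{2(d+1)+1}$ (complex case) of $M$ is semialgebraic. The key idea is that for any nonzero $f$ of degree $d$ there is a unique factorization $f = z_0 G(X) H(X)$, where $G$ is monic of degree $k$ with all roots of modulus strictly greater than $1$ and $H$ is monic of degree $d - k$ with all roots of modulus at most $1$; in this factorization one computes $M(f) = |z_0|\prod_{|\alpha_h|>1}|\alpha_h| = |z_0 G(0)|$, since $|G(0)| = \prod_{|\alpha_h|>1}|\alpha_h|$ and roots in the closed unit disk contribute factors $\max\{1, |\alpha_h|\} = 1$. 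I would exploit this to describe the graph as a finite union, indexed by $k \in \{0, 1, \ldots, d\}$, of projections of explicitly semialgebraic sets.

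For each such $k$, introduce the auxiliary set $E_k$ of tuples $(\bo{z}, y, g_0, \ldots, g_{k-1}, h_0, \ldots, h_{d-k-1})$ subject to the following conditions: (i) the polynomial identity $z_0 G(X) H(X) = z_0 X^d + z_1 X^{d-1} + \cdots + z_d$ holds, where $G(X) = X^k + g_{k-1}X^{k-1} + \cdots + g_0$ and $H(X) = X^{d-k} + h_{d-k-1}X^{d-k-1} + \cdots + h_0$, which unfolds to $d + 1$ polynomial equations in the coordinates; (ii) every complex root of $G$ has modulus strictly greater than $1$; (iii) every complex root of $H$ has modulus at most $1$; (iv) $y \geq 0$ and $y^2 = |z_0|^2 |g_0|^2$. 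By the formula above, $\Gamma(M) \cap \{z_0 \neq 0\}$ coincides with the union over $k$ of the projections of $E_k$ onto the $(\bo{z}, y)$ coordinates, and by Theorem \ref{tarski} these projections are semialgebraic once each $E_k$ is.

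The main obstacle is establishing that conditions (ii) and (iii) are semialgebraic. Identifying $\C$ with $\R^2$, condition (ii) reads: for all $(x, y) \in \R^2$, if the two real polynomial equations obtained by separating the real and imaginary parts of $G(x + iy) = 0$ hold, then $x^2 + y^2 > 1$. By Theorem \ref{tarski} the projection of a semialgebraic set is semialgebraic, hence so is its complement, and iterating over the two real variables shows that universal quantification over $\R$ preserves semialgebraicity; this yields a semialgebraic condition in the coefficients of $G$, and similarly for (iii). Combined with the polynomial conditions (i) and (iv), each $E_k$ is semialgebraic. Finally, the slice $\{z_0 = 0\}$ is handled by induction on $d$: the base case $d = 0$ gives $M(z_0) = |z_0|$, whose graph is manifestly semialgebraic, and for $d \geq 1$ the restriction $M|_{\{z_0 = 0\}}$ equals the Mahler measure of the associated degree $d - 1$ polynomial, semialgebraic by the inductive hypothesis.
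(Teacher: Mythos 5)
Your argument is correct and follows the same scaffolding as the paper's: isolate the slice $z_0 = 0$ and treat it by induction on the degree, and for $z_0 \neq 0$ exhibit the relevant piece of the graph as a projection of an explicitly semialgebraic auxiliary set via Theorem \ref{tarski}. The genuine difference is the choice of auxiliary data. The paper adjoins the $n$ roots of $f$ as extra real coordinates $(\alpha_h, \beta_h)$, cuts out the auxiliary set $A'$ by the Vieta (symmetric-function) relations together with the formula $|x_0 + i x_1| \prod_h \max\{1, |\alpha_h + i\beta_h|\} = t$, and projects once. You instead adjoin the coefficients of the unique factorization $f = z_0 G H$ with $G$ collecting the roots strictly outside the unit circle and $H$ the rest, stratify by $k = \deg G$, and reduce the Mahler measure to the purely polynomial identity $y^2 = |z_0|^2 |g_0|^2$ coming from $M(f) = |z_0 G(0)|$. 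This buys you an algebraic defining equation for $M$ with no $\max$ in sight, but it costs an extra round of quantifier elimination to certify that conditions (ii) and (iii) — the root-location constraints on $G$ and $H$ — are semialgebraic, since they quantify universally over the complex plane. Both routes are sound; the paper's is a shade more direct because $\max$ is already semialgebraic and no stratification over $k$ is needed, while yours makes the structural identity $M(f) = |z_0 G(0)|$ transparent and replaces the piecewise-defined constraint by a polynomial one.

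Two small points worth making explicit if you write this up. First, in the real case you should note that the root-location constraints force $G$ and $H$ to have real coefficients whenever $f$ does (non-real roots come in conjugate pairs with equal modulus), so the real graph really is the projection of the real slice of each $E_k$. Second, for a fixed $\bo{z}$ with $z_0 \neq 0$ exactly one $k$ admits the factorization, so the projections of the $E_k$ are in fact disjoint over $\{z_0 \neq 0\}$; this is not needed for correctness but clarifies why the union recovers precisely $\Gamma(M) \cap \{z_0 \neq 0\}$.
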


\begin{proof}
We start by proving the claim for the complex Mahler measure. We need to prove that, for every positive integer $n$, the function 
$$
\begin{array}{cccc}
M_n:& \R^{2(n+1)}&\rightarrow & [0,\infty) \\
  & (x_0, \dots ,x_{2n+1})& \mapsto & M\left( (x_0+ix_1)X^n + \cdots +(x_{2n}+ix_{2n+1}) \right)
\end{array}
$$
is semialgebraic, i.e., its graph
\begin{equation*}\label{graphMahler}
\Gamma(M_n) = \lg \left( x_0, \ldots ,x_{2n+1}, t\right) \in \R^{2(n+1)+1}: M\left(x_0, \ldots ,x_{2n+1}\right)=t \rg 
\end{equation*}
is a semialgebraic set. 

We prove this by induction on $n$. For $n=1$,
$$
\Gamma(M_1) = \lg \left( x_0, x_1,x_2,x_3, t\right) \in \R^{5}: \max \lg x_0^2+x_1^2 , x_2^2+x_3^2  \rg =t^2 ,t\geq 0 \rg 
$$
is clearly semialgebraic. Now suppose $n>1$. Let $\Gamma(M_n)=A \cup B$, where 
$$
A= \lg (x_0,\dots , x_{2n+1},t)\in \Gamma(M_n): x_0^2+x_1^2 \neq 0 \rg ,
$$
and
$$
B= \lg (x_0,\dots , x_{2n+1},t)\in \Gamma(M_n): x_0=x_1=0 \rg .
$$
By the inductive hypothesis, $B$ is a semialgebraic set since $B=\{ (0,0) \}\times \Gamma(M_{n-1})$.
Now let $A'$ be the set of points 
$$
(x_0,\dots , x_{2n+1},t,\alpha_1, \beta_1, \dots , \alpha_n , \beta_n)\in\R^{2(n+1)+1+2n},
$$
such that $ x_0^2+x_1^2 \neq 0$, $\alpha_h+i\beta_h$, for $h=1,\dots ,n$, are the roots of $(x_0+ix_1)X^n + \cdots +(x_{2n}+ix_{2n+1})$ and 
\begin{equation}\label{mah}
|x_0+i x_1|\prod_{h=1}^{n} \max \lg 1,|\alpha_h+ i \beta_h|\rg=t .
\end{equation}
This set $A'$ is defined by the symmetric functions that link the coefficients of a polynomial with its roots and by (\ref{mah}). It is therefore semialgebraic. Since $A$ is the projection of $A'$ on the first $2(n+1)+1$ coordinates, it is also semialgebraic by the Tarski-Seidenberg principle (Theorem \ref{tarski}). We have the claim for the complex Mahler measure.

For the real one it is sufficient to note that its graph is nothing but the projection that forgets the coordinates $x_1,x_3 ,\dots, x_{2n-1} ,x_{2n+1}$ of
$$
\Gamma(M_n)\cap \{ (x_0, \dots ,x_{2n+1},t): x_{2j+1}=0 \mbox{ for } j=0, \dots ,n \}.
$$
\end{proof}

Since $M$ satisfies the three conditions of Definition \ref{defdist}, it is a semialgebraic distance function. Moreover, in \cite{Chern2001}, Chern and Vaaler calculated the volume of the sets of the form (\ref{sets}) for the real and the complex monic Mahler measure. By (1.16) and (1.17) of \cite{Chern2001}, for every $T\geq 1$ the volumes of the sets
$$
\{(z_1,\ldots ,z_n) \in \R^n:M(1,z_1, \ldots ,z_n)\leq T\},
$$
and
$$
\{(z_1,\ldots ,z_n) \in \C^n:M(1,z_1, \ldots ,z_n)\leq T\}
$$
are, respectively, polynomials $p_\R(T)$ and $p_\C(T)$ of degree $n$ and $2n$ and leading coefficients
$$
C_{\R,n} = 2^{n-M} \left( \prod_{j=1}^{M}\left( \frac{2j}{2j+1} \right)^{n-2j} \right) \frac{n^M}{M!}, \footnote{There is a misprint in (1.16) of \cite{Chern2001}, $2^{-N}$ should read $2^{-M}$.}
$$
with $M=\lfloor \frac{n-1}{2} \rfloor$, and 
$$
C_{\C,n} = \pi^n \frac{n^n}{\left( n! \right)^2}.
$$

We just showed that $\mc{N}$ satisfies the hypothesis of Theorem \ref{mainthm} and we have that for every $\cH_0>1$ there exists a positive $C_0=C_0(m,n,\fN(S),\cH_0)$, such that for every $\cH\geq \cH_0$, 
\begin{equation}\label{asymptformMahler}
\left| \left|\Oseen_S^{\mc{N}}(\cH)\right|-   C_{\R,n}^rC^s_{\C,n} B_{\ks, S}^{(n)}  \mc{H}^{\m n} \left( \log  \mc{H} \right)^{|S|-1}  \right|  
  \leq \left\lbrace
\begin{array}{ll}
C_0   \mc{H}^{m n} \left( \log  \mc{H}  \right)^{|S|-2},  & \mbox{if $|S|> 1$,}\\
C_0 \mc{H}^{\m n-1}, & \mbox{if $|S|=1$,}
\end{array} \right.
\end{equation}
where $B^{(n)}_{\ks,S}$ is the constant defined in (\ref{const}).

Let us reformulate these considerations in terms of polynomials. We proceed in a similar way as done in Section 2 of \cite{algint}. For any positive integer $n$ we fix the system $\cN_n$ of dimension $n$ to consist of Mahler measure distance functions and we define 
$$
\begin{array}{cccc}
M^\ks:& \ks[X]& \rightarrow & [0,\infty) \\
 & a_0X^n+a_1X^{n-1}+\cdots +a_n  & \mapsto & H_{\cN_n}(a_0,a_1, \ldots , a_n).
\end{array}
$$
Therefore we can write
$$
M^k(a_0, \ldots , a_n)=\left( \prod_{i=1}^{r+s} M(\sigma_i(a_0)X^n+\cdots +\sigma_i(a_n))^{\frac{d_i}{\m}} \right)\prod_{v \nmid \infty} \max  \left\lbrace |a_0|_v, \ldots , |a_n|_v \right\rbrace^{\frac{d_v}{\m}}.
$$

Let $\mathcal{M}_{\ks, S}(\n , \cH)$ be the set of of monic polynomials $f \in \Oseen_S[X]$ of degree $n$ with $M^\ks(f)\leq \mc{H}$. Clearly $\left|\Oseen_S^\mc{N}(\cH)\right|=|\mathcal{M}_{\ks, S}(\n , \cH)|$ and  (\ref{asymptformMahler}) is an estimate for such cardinality. Fixing $m$, $n$, $|S|$ and an $|S|$-tuple of prime powers, and letting $\ks $ vary among all number fields of degree $m$, and $S$ among the sets of places of the chosen number field with the prescribed set of norms of the non-archimedean places, the constants $C_{\R,n}^r$, $C^s_{\C,n}$ and  $B_{\ks, S}^{(n)}$ are bounded and therefore there exists a constant $G_{m,\fN(S)}^{(n)}$, depending on $n$, $m$ and $\fN(S)$, such that
\begin{align}\label{upbound}
\left|\mathcal{M}_{\ks, S}(\n , \cH)\right| \leq G_{m,\fN(S)}^{(n)} \mc{H}^{\m n} \left( \log  \mc{H}+1 \right)^{|S|-1} ,
\end{align}
for every $\cH\geq 1$.

Note that, for every $\alpha \in \ks$, 
\begin{equation}\label{mahlheight}
M^\ks(X-\alpha)=\prod_{v \in M_\ks } \max \left\lbrace 1, |\alpha|_v\right\rbrace^{\frac{d_v}{\m}}=H(\alpha).
\end{equation}

It is clear from the definition of Mahler measure (\ref{defMahler}) that
$$
M(fg)=M(f)M(g),
$$
and therefore, by Lemma 1.6.3 of \cite{BombGub}, one can see that
$$
M^\ks(fg)=M^\ks(f)M^\ks(g),
$$
for every $f,g \in \ks[X]$.

Now we want to restrict to monic $f$ irreducible over $\ks$. Let $\widetilde{\mathcal{M}}_{\ks, S}(\n , \cH)$ be the set of monic irreducible polynomials $f \in \Oseen_S[X]$ of degree $n$ with $M^\ks(f)\leq \mc{H}$, i.e., the polynomials in $\mathcal{M}_{\ks, S}(\n , \cH)$ that are irreducible over $\ks$.

\begin{corollary}
For every $\cH_0>1$ there exists a positive $D_0$, depending on $n$, $m$, $\fN(S)$ and $\mc{H}_0$, such that for every $\cH\geq \cH_0$ we have
$$
\left|\left|\widetilde{\mathcal{M}}_{\ks, S}(\n , \cH)\right|- C_{\R,n}^rC^s_{\C,n} B_{\ks, S}^{(n)}  \mc{H}^{\m n} \left( \log  \mc{H} \right)^{|S|-1} \right|  
\leq \left\lbrace
\begin{array}{ll}
D_0   \mc{H}^{m n} \left( \log  \mc{H}  \right)^{|S|-2},  & \mbox{if $|S|> 1$,}\\
 D_0 \mc{H}^{\m n-1}\mc{L}, & \mbox{if $|S|=1$,}
\end{array} \right.
$$
where $\mc{L}=\log \cH$ if $(m,n)=(1,2)$ and 1 otherwise.
\end{corollary}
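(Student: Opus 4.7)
My plan is to write $|\widetilde{\mathcal{M}}_{\ks, S}(n, \cH)| = |\mathcal{M}_{\ks, S}(n, \cH)| - R(\cH)$, where $R(\cH)$ counts reducible monic $f \in \Oseen_S[X]$ of degree $n$ with $M^\ks(f) \leq \cH$, insert the asymptotic (\ref{asymptformMahler}) for the first term, and bound $R(\cH)$ by something absorbed into the target error.

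To control $R(\cH)$ I would first reduce to a sum over factorizations. Because $\Oseen_S$ is integrally closed in $\ks$, any factorization of a monic $f\in\Oseen_S[X]$ into monic factors in $\ks[X]$ actually takes place in $\Oseen_S[X]$; so every reducible $f$ can be written as $f = gh$ with $g, h \in \Oseen_S[X]$ monic of positive degrees $n_1 + n_2 = n$. Multiplicativity of $M^\ks$, together with the fact that $M^\ks \geq 1$ on monic $S$-integral polynomials, yields $M^\ks(g), M^\ks(h) \leq \cH$, so, allowing harmless overcounting when an $f$ has several factorizations,
$$
R(\cH) \leq \sum_{\substack{n_1 + n_2 = n \\ 1 \leq n_1 \leq n_2}} \sum_{g \in \mathcal{M}_{\ks, S}(n_1, \cH)} |\mathcal{M}_{\ks, S}(n_2, \cH / M^\ks(g))|.
$$

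Next I would apply the uniform bound (\ref{upbound}) to the inner count, extract the factor $\cH^{m n_2}(\log\cH+1)^{|S|-1}$, and estimate the residual sum $\sum_{g} M^\ks(g)^{-m n_2}$ by Abel summation, with (\ref{upbound}) fed in as input on the distribution function $|\mathcal{M}_{\ks, S}(n_1, T)|$. Two regimes appear: for $n_1 < n_2$ the weight $T^{m(n_1 - n_2) - 1}(\log T + 1)^{|S|-1}$ is integrable at infinity and the sum is $O(1)$; for $n_1 = n_2$ (which forces $n$ even) it grows like $(\log \cH + 1)^{|S|}$. Multiplying back then gives
$$
R(\cH) = O\!\left(\cH^{m n_2^{\star}}(\log\cH+1)^{2|S| - 1}\right),
$$
where $n_2^{\star}$ is the largest $n_2$ appearing in the factorization sum, so $\lceil n/2 \rceil \leq n_2^{\star} \leq n - 1$, and the full exponent $2|S| - 1$ is attained only in the equal-degree case.

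Finally I would check that this is absorbed by the claimed error. For $|S| > 1$ the extra factor $\cH^{m(n - n_2^{\star})} \geq \cH^m$ easily beats any power of $\log \cH$, producing the required $O(\cH^{mn}(\log \cH)^{|S|-2})$. For $|S| = 1$ the bound collapses to $O(\cH^{m n_2^{\star}})$, with an additional $\log \cH$ only in the equal-degree case; the inequality $n_2^{\star} \leq n - 1$ and $m \geq 1$ give $m n_2^{\star} \leq m n - 1$, and simultaneous equality together with the presence of the equal-degree log forces $m = 1$ and $n_1 = n_2 = 1$, i.e.\ $(m, n) = (1, 2)$, which is precisely the configuration where the statement requires $\cL = \log \cH$. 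I expect the main obstacle to be this final case analysis: verifying that $(m, n) = (1, 2)$ is indeed the unique borderline requiring the $\log \cH$ correction, while for every other $(m, n)$ the combination of the $\cH$-gap and the absence of (or smaller) log factor produces the sharper $O(\cH^{mn - 1})$ demanded by the statement.
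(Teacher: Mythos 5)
Your proposal is correct and follows the same overall strategy as the paper: write $f = gh$ with $g,h\in\Oseen_S[X]$ monic of degrees $n_1 \leq n_2$ summing to $n$, invoke multiplicativity of $M^\ks$ and the uniform upper bound \eqref{upbound}, and observe that only the balanced split $n_1 = n_2$ produces an extra logarithm, with $(m,n) = (1,2)$, $|S|=1$ as the unique borderline case. The one technical departure is how you control the inner sum over $g$: you use Abel summation against the distribution function $T\mapsto |\mathcal{M}_{\ks,S}(n_1,T)|$, whereas the paper performs a dyadic decomposition $2^{d-1}\leq M^\ks(g)<2^d$ and sums a geometric series in $d$. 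These are interchangeable devices and lead to the same two regimes (a convergent geometric sum for $n_1<n_2$, a logarithmically divergent one for $n_1 = n_2$). You also make explicit the Gauss's-lemma/integral-closure fact that a monic factorization of $f\in\Oseen_S[X]$ over $\ks[X]$ already lives in $\Oseen_S[X]$, which the paper uses tacitly; that is a worthwhile clarification. One imprecision to fix: writing $R(\cH)=O\bigl(\cH^{mn_2^\star}(\log\cH+1)^{2|S|-1}\bigr)$ with $n_2^\star = n-1$ attaches the $2|S|-1$ log-power to the wrong piece. The unequal-degree contribution has exponent pair $(\cH^{m(n-1)},(\log\cH+1)^{|S|-1})$ while the equal-degree contribution has $(\cH^{mn/2},(\log\cH+1)^{2|S|-1})$; it is cleaner to keep them as two separate terms, each of which is then absorbed exactly as you describe, and that separation is precisely what isolates $(m,n)=(1,2)$ when $|S|=1$.
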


\begin{proof}
For $n=1$, there is nothing to prove. Suppose $n>1$. We show that, up to a constant, the number of all monic reducible $f \in \Oseen_S[X]$ of degree $n$ with $M^\ks(f)\leq \mc{H}$ is not larger than the right hand side of (\ref{asymptformMahler}), except for the case $|S|=1$ and $(m,n)=(1,2)$. 

Consider all $f=gh \in \mathcal{M}_{\ks, S}(\n , \cH)$ with $g,h\in \Oseen_S[X]$ monic of degree $a$ and $b$ respectively, with $0<a\leq b<n$ and $a+b=n$.
We have $1 \leq M^\ks(g),M^\ks(h) \leq \cH$ because $g$ and $h$ are monic. Thus, there exists a positive integer $d$ such that $2^{d-1}\leq M^\ks(g)<2^d$. Note that $d$ must satisfy
\begin{equation}\label{ineqk}
1\leq d \leq \frac{\log \mc{H}}{\log 2}+1\leq 2 \log \mc{H} +1.
\end{equation}
Since $M^\ks$ is multiplicative,
$$
M^\ks(h)=\frac{M^\ks(f)}{M^\ks(g)}\leq 2^{1-d} \mc{H}.
$$
Using (\ref{upbound}) and noting that $2^d\leq 2 \cH$, we can say that there are at most
$$
G_{m,\fN(S)}^{(a)}\left(2^d\right)^{\m a } \left( \log  2^d+1 \right)^{|S|-1} \leq G_{m,\fN(S)}^{(a)}\left(2^d\right)^{\m a } \left( \log \cH +2 \right)^{|S|-1}  
$$
possibilities for $g$ and
$$
G_{m,\fN(S)}^{(b)} \left(2^{1-d} \mc{H}\right)^{\m b}\left( \log \left(2^{1-d} \mc{H} \right)+1 \right)^{|S|-1}   \leq G_{m,\fN(S)}^{(b)}\left(2^{1-d} \mc{H}\right)^{\m b}\left( \log  \mc{H} +2 \right)^{|S|-1}
$$
possibilities for $h$. Therefore, we have at most
\begin{align} \label{red}
H_{m,\fN(S)}^{(n)}\mc{H}^{\m b} 2^{\m d(a-b)} \left(  \log \mc{H} +2 \right)^{2(|S|-1)}
\end{align}
possibilities for $gh$ with $M^\ks (gh)\leq \cH$ and $2^{d-1}\leq M^\ks(g)<2^d$, where $H_{m,\fN(S)}^{(n)}$ is a real constant depending on $n$, $m$ and $\fN(S)$.

If $a=b=\frac{n}{2}$, then (\ref{red}) is
$$
H_{m,\fN(S)}^{(n)} \mc{H}^{\m \frac{n}{2}}  \left(  \log \mc{H} +2 \right)^{2(|S|-1)}.
$$
Summing over all $d$, $1\leq d \leq  \lfloor 2 \log \mc{H} \rfloor+1$ (recall (\ref{ineqk})), gives an extra factor $2\log \mc{H} +1 $. Therefore, when $a=b$, there are at most
$$
H_{m,\fN(S)}^{(n)} \mc{H}^{\frac{\m n}{2}}\left(2 \log \mc{H} +2 \right)^{2|S|-1}
$$
possibilities for $f=gh$, with $M^\ks(f)\leq \cH$. If $|S|>1$ or $(m,n)\neq (1,2)$, this has smaller order than the right hand side of (\ref{asymptformMahler}), since $\m n >2$ implies $\frac{\m n}{2}< \m n-1$. In the case $|S|=1$ and $(m,n)= (1,2)$, we get $H_{m,\fN(S)}^{(n)} \cH \left(2 \log \mc{H} +2 \right)$ and we need an additional logarithm factor.

In the case $a <b$, summing $2^{\m d(a-b)}$ over all $d$, $1\leq d \leq  \lfloor 2 \log \mc{H} \rfloor+1 =:D$, we get
$$
\sum_{d=1}^D \left(2^{\m(a-b)}\right)^d \leq \sum_{d=1}^{D}2^{-d}\leq 1.
$$
Thus, recalling $b\leq n-1$, if $a<b$ there are at most 
$$
H_{m,\fN(S)}^{(n)}\mc{H}^{\m(n-1)} \left( \log \mc{H} +2 \right)^{2(|S|-1)}
$$
possibilities for $f=gh$, with $M^\ks(f)\leq \cH$. This is again not larger than the right hand side of (\ref{asymptformMahler}).
\end{proof}

The last step of the proof links such irreducible polynomials with their roots and $M^\ks$ with the height of these roots.
Recall that $\overline{S}$ is the set of places of $\overline{\ks}$ that lie above the places in $S$.

\begin{lemma}
An algebraic number $\beta \in \Oseen_{\overline{S}}$ has degree $e$ over $\ks$ and $H( \beta ) \leq \cH$ if and only if it is a root of a monic irreducible polynomial $f \in \Oseen_S[X]$ of degree $e$ with $M^\ks(f)\leq \cH^e$. 
\end{lemma}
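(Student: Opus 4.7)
The plan is to reduce the lemma to the single identity
\[
M^\ks(f)=H(\beta)^e,
\]
where $f\in\ks[X]$ is the minimal polynomial of $\beta$ over $\ks$; this is the degree-$e$ generalization of \eqref{mahlheight}. Once this identity is available, both directions of the lemma are immediate: a monic irreducible $f\in\Oseen_S[X]$ of degree $e$ having $\beta$ as a root is precisely the minimal polynomial of $\beta$ over $\ks$, so $\deg f=e$ is equivalent to $[\ks(\beta):\ks]=e$. By the alternative description of $\Oseen_{\overline{S}}$ recalled in the introduction, $\beta\in\Oseen_{\overline{S}}$ is equivalent to $f$ being monic with coefficients in $\Oseen_S$, and the height bound $H(\beta)\le\cH$ translates into $M^\ks(f)\le\cH^e$.

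To prove the identity I would compare the two sides place by place. From
\[
M^\ks(f)^m=\prod_{v\in M_\ks} N_v(f)^{d_v}, \qquad H(\beta)^{em}=\prod_{w\in M_{\ks(\beta)}}\max\{1,|\beta|_w\}^{d_w},
\]
it suffices to establish, for every $v\in M_\ks$, the local equality
\[
N_v(f)^{d_v}=\prod_{w\mid v}\max\{1,|\beta|_w\}^{d_w}.
\]
For archimedean $v$, the roots $\alpha_1,\dots,\alpha_e$ of $\sigma_v(f)$ in $\C$ are in bijection with the embeddings $\ks(\beta)\hookrightarrow\C$ extending $\sigma_v$, and those inducing a given place $w\mid v$ number $[\ks(\beta)_w:\ks_v]=d_w/d_v$. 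Since $f$ is monic, the very definition of the Mahler measure then yields $N_v(f)=\prod_{w\mid v}\max\{1,|\beta|_w\}^{d_w/d_v}$, which is the sought identity after raising to the power $d_v$.

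For non-archimedean $v$, I would factor $f=\prod_{w\mid v}f_w$ in $\ks_v[X]$ with $\deg f_w=[\ks(\beta)_w:\ks_v]$, and apply Gauss's lemma (multiplicativity of the Gauss norm) to obtain $N_v(f)=\max_j|a_j|_v=\prod_h\max\{1,|\beta_h|_v\}$, where $\beta_h$ runs over the roots of $f$ in $\overline{\ks_v}$. Grouping these roots according to the place $w\mid v$ they induce on $\ks(\beta)$, and using that $|\beta_h|_v=|\beta|_w$ for each root contributing to $w$, gives the same local identity. Taking the product over all $v\in M_\ks$ then yields $M^\ks(f)^m=H(\beta)^{em}$, whence $M^\ks(f)=H(\beta)^e$, and the lemma follows. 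The main obstacle, though entirely standard, is the bookkeeping of the local degrees $d_w/d_v=[\ks(\beta)_w:\ks_v]$ and the use of Gauss's lemma over $\ks_v$; this is where the archimedean and non-archimedean contributions must be handled in parallel so that they collapse into the single multiplicative identity.
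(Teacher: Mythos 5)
Your proposal is correct, and it reaches the same key identity $M^\ks(f)=H(\beta)^e$ that the paper's proof hinges on, but by a genuinely different route. The paper avoids all local bookkeeping: it observes that $M^\ks$ extends to a field-independent function $M^{\overline{\Q}}$ on $\overline{\Q}[X]$ (by the same argument as for the Weil height, citing Lemma 1.5.2 of \cite{BombGub}), then invokes multiplicativity $M^\ks(fg)=M^\ks(f)M^\ks(g)$ (via Lemma 1.6.3 of \cite{BombGub}, which is essentially Gauss's lemma plus the multiplicativity of the archimedean Mahler measure), factors $f=(X-\alpha_1)\cdots(X-\alpha_e)$, applies the degree-one case \eqref{mahlheight}, and concludes since conjugate algebraic numbers share the same height (Proposition 1.5.17 of \cite{BombGub}). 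You instead verify the identity directly, place by place, matching $N_v(f)^{d_v}$ against $\prod_{w\mid v}\max\{1,|\beta|_w\}^{d_w}$, tracking local degrees $d_w/d_v=[\ks(\beta)_w:\ks_v]$ through the bijection between roots of $\sigma_v(f)$ and extensions of $v$ to $\ks(\beta)$, and applying Gauss's lemma at the finite places. Your argument is self-contained and makes the local mechanism transparent, at the cost of re-proving in effect exactly what the three cited lemmas of \cite{BombGub} encapsulate; the paper's route is shorter and modular, delegating the local-degree and Gauss-norm bookkeeping to the standard toolbox. One small point worth making explicit in your write-up: at the non-archimedean places the roots within a single irreducible factor over $\ks_v$ all have the same valuation because they are $\ks_v$-conjugate and the extension of $|\cdot|_v$ is unique, which is what lets you collapse the product over roots into a product over places $w\mid v$ with exponents $d_w/d_v$.
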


\begin{proof}
If an algebraic number $\beta \in \Oseen_{\overline{S}}$ has degree $e$ over $\ks$, then it is clearly a root of a monic irreducible polynomial $f \in \Oseen_S[X]$ of degree $e$, and vice-versa. We claim that
$$
H( \beta )^e=M^\ks(f).
$$

The function $M^\ks$ is independent of the choice of $k$ since it is possible to define an absolute $M^{\overline{\Q}}$ over $\overline{\Q}[X]$ that, restricted to any $k[X]$, coincides with $M^\ks$. To see this one can simply imitate the proof of the fact that the Weil height is independent of the field containing the coordinates (see \cite{BombGub}, Lemma 1.5.2).

%

Suppose $f=(X-\alpha_1)\cdots (X-\alpha_e)$. By (\ref{mahlheight}) we have
$$
M^{\Q(\alpha_i)}(X-\alpha_i)=H(\alpha_i),
$$
and the $\alpha_i$ have the same height because they are conjugate (see \cite{BombGub}, Proposition 1.5.17). Finally, by the multiplicativity of $M^\ks$ we can see that
$$
M^\ks(f)=M^{\overline{\Q}}(f)=\prod_{i=1}^e M^{\overline{\Q}}(X-\alpha_i)= H(\alpha_j)^e,
$$
for any $\alpha_j$ root of $f$.
\end{proof}

This implies that $|N(\Oseen_S(1,e),\cH)|=e \left|\widetilde{\mathcal{M}}_{\ks, S}(e, \cH^e)\right|$ because there are $e$ different $\beta \in  \Oseen_{\overline{S}}$ with the same minimal polynomial over $\ks$. We have that, for every $\cH_0>1$, there exists a positive $E_0=E_0(m,e,\fN(S),\mc{H}_0)$ such that, for every $\cH\geq \cH_0$,
\begin{multline*}
\left| N \left( \Oseen_S(1,e),\cH \right) -  e^{|S|}  C_{\R,e}^rC^s_{\C,e} B_{\ks, S}^{(e)} \mc{H}^{\m e^2} \left( \log  \mc{H} \right)^{|S|-1} \right| \\
\leq \left\lbrace
\begin{array}{ll}
E_0   \mc{H}^{m e^2} \left( \log  \mc{H}  \right)^{|S|-2},  & \mbox{if $|S|> 1$,}\\
 E_0 \mc{H}^{e(\m e-1)}\mc{L}, & \mbox{if $|S|=1$,}
\end{array} \right.
\end{multline*}
where $\mc{L}=\log \cH$ if $(m,e)=(1,2)$ and 1 otherwise.
We obtain Theorem \ref{thmint} by choosing $\cH_0=2$.

\section{Counting lattice points}

We start this section introducing the counting theorem that will be used to prove Theorem \ref{mainthm}. The principle dates back to Davenport \cite{Davenport1951} and was developed by several authors. In a previous work \cite{Barroero2012}, the author and Widmer formulated a counting theorem that relies on Davenport's Theorem and uses o-minimal structures. We do not need Theorem 1.3 of \cite{Barroero2012} in its full generality as we count lattice points in semialgebraic sets.

For a semialgebraic set $Z \subseteq \R^{n+n'}$, we call $Z_{\bo{t}}=\{ \x \in \R^n: (\x,\bo{t})\in Z\}$ the fiber of $Z$ lying above $\bo{t} \in \R^{n'}$ and $Z$ a semialgebraic family. It is clear that the fibers $Z_{\bo{t}}$ are semialgebraic subsets of $\R^n$. Let $\Lambda$ be a lattice of $\R^n$ with determinant $\det \Lambda$ and let $\lambda_i=\lambda_i(\Lambda)$, for $i=1,\ldots,n$, be the successive minima of $\Lambda$ with respect to the unit ball $B_0(1)$, i.e., 
\begin{alignat*}1
\lambda_i=\inf\{\lambda:B_0(\lambda)\cap\Lambda \text{ contains $i$ linearly independent vectors}\}.
\end{alignat*}

The following theorem is a special case of Theorem 1.3 of \cite{Barroero2012}.

\begin{theorem}\label{counttheorem}
Let $Z\subset \R^{n+n'}$ be a semialgebraic family and suppose the  fibers $Z_{\bo{t}}$ are bounded. Then there exists a constant $c_Z \in \R$, depending only on the family, such that
\begin{equation*}\label{eqcount}
\left| |Z_{\bo{t}}\cap \Lambda|-\frac{\Vol(Z_{\bo{t}})}{\det \Lambda} \right|\leq \sum_{j=0}^{n-1}c_{Z}\frac{V_j(Z_{\bo{t}})}{\lambda_1\cdots \lambda_j},
\end{equation*}
where $V_j(Z_{\bo{t}})$ is the sum of the $j$-dimensional volumes of the orthogonal projections of $Z_{\bo{t}}$ 
on every $j$-dimensional coordinate subspace of $\R^n$ and $V_0(Z_{\bo{t}})=1$.
\end{theorem}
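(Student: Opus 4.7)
The plan is to deduce this counting estimate directly from Theorem 1.3 of \cite{Barroero2012}, which establishes the same bound for bounded definable families in any o-minimal expansion of the ordered real field. Before invoking that result I would verify that the collection of semialgebraic subsets of $\R^n$, as $n$ varies, constitutes an o-minimal structure on $(\R,<,+,\cdot)$: closure under Boolean operations and Cartesian products is immediate from the definition, closure under projection is precisely the Tarski--Seidenberg principle (Theorem~\ref{tarski}), and the requirement that definable subsets of $\R$ be finite unions of points and open intervals follows from the sign decomposition of univariate real polynomials. Since the family $Z \subseteq \R^{n+n'}$ given in the statement is then a bounded definable family in this structure, the theorem of \cite{Barroero2012} applies verbatim and yields the stated inequality with a constant $c_Z$ depending only on $Z$.

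For context on the mechanism underlying that reference, I would briefly indicate the two ingredients used there. First, for a fixed lattice $\Lambda$ one chooses, via Mahler's lemma, a basis $v_1,\ldots,v_n$ satisfying $\|v_j\| \ll_n \lambda_j$; letting $B$ be the associated basis matrix, the identity $|Z_{\bo{t}}\cap \Lambda|=|B^{-1}Z_{\bo{t}}\cap \Z^n|$ reduces the problem to a count in $\Z^n$, and Davenport's classical lattice-point lemma controls that count by $\Vol(B^{-1}Z_{\bo{t}})=\Vol(Z_{\bo{t}})/\det\Lambda$ plus an error which, after rewriting the projections of $B^{-1}Z_{\bo{t}}$ in terms of those of $Z_{\bo{t}}$ and using the bounds $\|v_j\|\ll \lambda_j$, becomes exactly of the desired shape $\sum_j V_j(Z_{\bo{t}})/(\lambda_1\cdots\lambda_j)$. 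This treats one fiber at a time with a constant that a priori depends on that fiber.

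The main obstacle, and the point at which semialgebraicity (more generally o-minimality) is genuinely needed, is the second stage: absorbing all the fiber-dependent constants above into a single $c_Z$ that depends only on the family. This uniformity rests on a uniform Lipschitz parametrization of the fibers of a definable family, available in the semialgebraic setting via Yomdin--Gromov reparametrization and, more generally, via cell decomposition in o-minimal structures. Since this uniform parametrization is precisely what \cite{Barroero2012} established in order to prove Theorem 1.3 there, the only content required here is the routine identification of semialgebraic families as a special case; no independent argument is needed.
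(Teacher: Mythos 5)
Your proposal takes essentially the same approach as the paper: the paper's entire proof is the single sentence stating that this is a special case of Theorem~1.3 of \cite{Barroero2012}, and you correctly identify that reduction, adding the routine (and implicitly assumed) verification that the semialgebraic sets form an o-minimal structure in which the cited theorem applies.
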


Let us introduce the family we want to apply Theorem \ref{counttheorem} to. We fix an $(r,s)$-system $\cN$ of dimension $n$ consisting of $r$ real and $s$ complex semialgebraic distance functions. Recall that we defined $\tN_i(\z)=N_i(1,\z)$. Moreover, we see the complex $\tN_i$ as functions from $\R^{2n}$, i.e.,
$$
\tN_i(x_1,x_2,\dots, x_{2n-1},x_{2n})=\tN_i(z_1,\dots ,z_n),
$$ 
for $(x_1,x_2,\dots, x_{2n-1},x_{2n})=(\Re(z_1),\Im(z_1),\dots ,\Re(z_n),\Im(z_n))$.

Recall that $d_i=1$, for $i=1,\dots ,r$, and $d_i=2$, for $i=r+1,\dots ,r+s$, and $m=r+2s$. Let
\begin{equation}\label{defsetprod}
Z=\left\lbrace (\bo{x}_1, \ldots , \bo{x}_{r+s},t) \in \R^{n(r+2s)+1} : \prod_{i=1}^{r+s} \tN_i(\bo{x}_i)^{d_i}\leq t \right\rbrace ,
\end{equation}
where $\x_i \in \R^{d_i n}$.

We need to show that $Z$ is a semialgebraic family and that the fibers $Z_t$ are bounded for every $t\in \R$.

\begin{lemma}

The set $Z$ defined in (\ref{defsetprod}) is semialgebraic.

\end{lemma}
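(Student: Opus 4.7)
The plan is to realize $Z$ as a projection of an explicitly semialgebraic set, using only the defining property iii.\ of Definition \ref{defdist} together with the Tarski--Seidenberg principle (Theorem \ref{tarski}).

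First I would observe that each $\tN_i$ is itself a semialgebraic function. Indeed, by assumption $N_i$ is semialgebraic, so its graph $\Gamma(N_i)\subseteq \R^{d_i(n+1)+1}$ is a semialgebraic set, and the graph of $\tN_i$ is obtained from $\Gamma(N_i)$ by intersecting with the linear (hence semialgebraic) subspace $\{z_0=1\}$ in the real case, or $\{\mathrm{Re}(z_0)=1,\ \mathrm{Im}(z_0)=0\}$ in the complex case, and then forgetting those coordinates. Both operations preserve semialgebraicity (the second by Tarski--Seidenberg), so $\Gamma(\tN_i)$ is semialgebraic.

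Next I would introduce auxiliary variables $s_1,\dots,s_{r+s}$ and consider the set
\begin{equation*}
A=\bigl\{(\bo{x}_1,\dots,\bo{x}_{r+s},s_1,\dots,s_{r+s},t) : s_i=\tN_i(\bo{x}_i)\text{ for }i=1,\dots,r+s,\ \ \textstyle\prod_{i=1}^{r+s} s_i^{d_i}\leq t\bigr\}.
\end{equation*}
Each condition $s_i=\tN_i(\bo{x}_i)$ is semialgebraic in the ambient coordinates because it is precisely the fibered graph of $\tN_i$, and the remaining condition $\prod_i s_i^{d_i}\leq t$ is a single polynomial inequality (recall $d_i\in\{1,2\}$). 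Hence $A$ is semialgebraic as a finite intersection of semialgebraic sets.

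Finally, $Z$ is exactly the image of $A$ under the coordinate projection that forgets the $s_i$. Iterating the Tarski--Seidenberg principle (Theorem \ref{tarski}) $r+s$ times yields that this projection of $A$ is semialgebraic, which is the claim. The only mildly non-routine step is the first one (verifying that $\tN_i$ inherits semialgebraicity from $N_i$), but this is immediate from the closure properties of semialgebraic sets under slicing and projection.
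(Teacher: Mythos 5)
Your proof is correct and takes essentially the same approach as the paper: both introduce auxiliary variables to encode the values $\tN_i(\bo{x}_i)$ via their (semialgebraic) graphs, intersect with the polynomial inequality $\prod_i s_i^{d_i}\leq t$, and then project away the auxiliary coordinates using Tarski--Seidenberg. The only cosmetic difference is that the paper splits your set $A$ into an intersection $\bigcap_i S^{(i)}\cap A$ before projecting, which is an immaterial rearrangement.
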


\begin{proof}

First note that, since the $N_i$ are semialgebraic functions, also the $\tN_i$ are semialgebraic. Indeed, one can get $\Gamma\left(\tN_i\right)$ by intersecting $\Gamma(N_i)$ with an appropriate affine subspace.
Let us define the following sets:
$$
S^{(i)}=\lg \left( \x_1,\ldots ,\x_{r+s},t,t_1,\ldots ,t_{r+s} \right) \in \R^{mn} \times \R^{1+r+s}: \tN_i(\x_i)= t_i  \rg,
$$
for $i=1,\ldots ,r+s$, and
$$
A=\lg \left( \x_1,\ldots ,\x_{r+s},t,t_1,\ldots ,t_{r+s} \right) \in \R^{mn} \times \R^{1+r+s}: \prod_{i=1}^{r+s} t_i^{d_i}\leq t   \rg.
$$
All these sets are clearly semialgebraic. Let $\pi$ be the projection map of $\R^{{mn}+1+r+s}$ to the first ${mn}+1$ coordinates. By the Tarski-Seidenberg principle (Theorem \ref{tarski}) the set
$$
B=\pi \left(\bigcap_i S^{(i)} \cap A \right)
$$
is semialgebraic. A point $\left( \x_1,\ldots ,\x_{r+s},t \right) $ belongs to $B$, if and only if there are $t_1, \ldots ,t_{r+s}$ such that $\tN_i(\x_i)= t_i$ for every $i$ and $\prod_{i=1}^{r+s} t_i^{d_i}\leq t$, i.e.,
 $\prod_{i=1}^{r+s} \tN_i(\bo{x}_i)^{d_i}\leq t$. Therefore $B=Z$, and we proved the claim.
\end{proof}

Since the $N_i$ are bounded distance functions, there exist positive real constants $\delta_i$ such that
$$
\delta_i |\z|_\infty \leq N_i(\z),
$$
for every $\z$ in $\R^{n+1} $ or $\C^{n+1}$ (see \cite{Cassels1971}, Lemma 2, p. 108). We define $\gamma_i=\max \{ \delta_i: \delta_i |\z|_\infty \leq N_i(\z) \}$ and $N_i'(\z)=\gamma_i |\z|_\infty$. As before, we use the notation $\tN_i'(\z)$ for $N_i'(1,\z)$. 

Let $\mc{N}'$ be the $(r,s)$-system consisting of $N'_i(\z)=\gamma_i|\z|_\infty$ for every $i=1,\dots ,r+s$. Each $(\bo{x}_1, \ldots , \bo{x}_{r+s},t)$ such that $\prod_{i=1}^{r+s} \tN_i(\bo{x}_i)^{d_i}\leq t $ satisfies $\prod_{i=1}^{r+s} \tN'_i(\bo{x}_i)^{d_i}\leq t $. Therefore, if 
$$
Z'=\left\lbrace (\bo{x}_1, \ldots , \bo{x}_{r+s},t) \in \R^{{mn}+1} : \prod_{i=1}^{r+s} \tN'_i(\bo{x}_i)^{d_i}\leq t \right\rbrace ,
$$ 
 we have $Z \subseteq Z'$. For every $\bo{x} \in \R^{d_i n}$ we have, by definition, $\tN'_i(\bo{x})\geq \gamma_i$ and therefore, for every $(\bo{x}_1, \ldots , \bo{x}_{r+s}) \in Z'_t$,
$$
\tN'_i(\bo{x}_i)^{d_i}\leq \frac{t}{\prod_{j \neq i} \gamma_j^{ d_j}}
$$
holds. This implies
$$
|\bo{x}_i|^{d_i}_\infty \leq \frac{t}{\prod_{j } \gamma_j^{ d_j}},
$$
for every $i=1,\dots , r+s$. We have just showed that the fibers $Z'_t$, and therefore $Z_t$, are bounded.

From now on we use the notation $Z(T)$ for $Z_T$. Recall that $V_j(Z(T))$ is the sum of the $j$-dimensional volumes of the orthogonal projections of $Z(T)$ 
on every $j$-dimensional coordinate subspace of $\R^n$ and $V_0(Z(T))=1$.

Since $Z \subseteq Z'$, we have $V_j(Z(T))\leq V_j(Z'(T))$.
By Theorem \ref{counttheorem} there exists a constant $c_Z$, depending only on $Z$, such that
\begin{equation}\label{ineq1}
\left| \left|Z(T) \cap \Lambda \right|- \frac{\Vol( Z(T) )}{\det \Lambda} \right| \leq \sum_{j=0}^{{mn}-1}c_{Z}\frac{V_j(Z'(T))}{\lambda_1\cdots \lambda_j},
\end{equation}
for every $T\in \R$. 

We have to calculate $\Vol( Z(T) )$ and we need upper bounds for $V_j(Z'(T))$. 

Recall we supposed that, for every $i=1,\dots, r+s$, $\tN_i(\x)\geq 1$ and the volume of the set $Z_i(T)$ defined in (\ref{sets}) is $p_i(T)$ for every $T\geq 1$, where $p_i$ is a polynomial of degree $d_i n$ and leading coefficient $C_i$.

\begin{lemma}\label{lemvol}

Let $q=r+s-1$. Under the hypotheses above we have that, for every $T \geq 1$,
\begin{equation*}\label{eqvol}
\Vol \left( Z(T)\right)=Q \left( T^\frac{1}{2},\log T \right),
\end{equation*}
where $Q(X,Y)\in \R[X,Y]$, $\deg_X Q = 2 \en$, $\deg_Y Q = q$ and the coefficient of $X^{2\en} Y^q$ is $\frac{\en^q }{q!} \prod_{i=1}^{q+1} C_i$.

\end{lemma}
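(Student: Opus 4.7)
The plan is to prove the lemma by induction on $r+s\geq 1$, peeling off one distance function at each step.

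For the base case $r+s=1$ (so $q=0$), the set $Z(T)$ coincides with $Z_1(T^{1/d_1})$, whose volume by hypothesis \eqref{hyp2} is $p_1(T^{1/d_1})$. Writing $T^{1/d_1}=(T^{1/2})^{2/d_1}$ and recalling that $p_1$ has degree $d_1 n$ and leading coefficient $C_1$, this is a polynomial in $T^{1/2}$ of degree $2n$ with leading coefficient $C_1$, which matches the claim (no $\log T$ factor appears since $q=0$).

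For the inductive step, I would condition on $u:=\tN_{r+s}(\x_{r+s})^{d_{r+s}}\in[1,T]$. Let $V^{\natural}(T')$ denote the analogous volume built from $\tN_1,\ldots,\tN_{r+s-1}$ and set $F_{r+s}(u):=p_{r+s}(u^{1/d_{r+s}})$, the cumulative distribution of the push-forward of Lebesgue measure under $\x\mapsto \tN_{r+s}(\x)^{d_{r+s}}$. Fubini then yields
\[
\Vol(Z(T))=\int_{1}^{T} V^{\natural}(T/u)\,F'_{r+s}(u)\,du.
\]
By the inductive hypothesis, $V^{\natural}(T/u)=Q'((T/u)^{1/2},\log(T/u))$ with $\deg_X Q'=2n$, $\deg_Y Q'=q-1$, and leading coefficient $\tfrac{n^{q-1}}{(q-1)!}\prod_{i=1}^{q}C_i$; and $F'_{r+s}(u)$ is a linear combination of terms $u^{b/d_{r+s}-1}$ for $b=1,\ldots,d_{r+s}n$, with leading term $nC_{r+s}u^{n-1}$. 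Expanding $(\log(T/u))^j$ binomially reduces everything to integrals of the form $\int_1^T u^{\gamma}(\log u)^a\,du$ with $\gamma=-k/2+b/d_{r+s}-1\in\tfrac{1}{2}\Z$, each of which equals $T^{\gamma+1}P_a(\log T)+\textup{const}$ when $\gamma\neq-1$ and $(\log T)^{a+1}/(a+1)$ when $\gamma=-1$. Multiplying by the external $T^{k/2}$, every resulting monomial is of the form $T^{b/d_{r+s}}(\log T)^{\leq j+1}$ or $T^{k/2}(\log T)^{\leq j+1}$, so $\Vol(Z(T))$ is indeed a polynomial in $T^{1/2}$ and $\log T$ of $X$-degree at most $2n$ (since $b/d_{r+s},\,k/2\leq n$).

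It remains to pin down the top coefficient. The $\log T$-exponent $q$ can arise only from the $\gamma=-1$ branch combined with $j=q-1$, and a $T^n$ factor then forces $k=2n$ and $b=d_{r+s}n$; this selects precisely the leading coefficients of $V^{\natural}$ and of $F'_{r+s}$. Combining them,
\[
\textup{leading integrand} = \frac{n^{q-1}}{(q-1)!}\prod_{i=1}^{q}C_i\cdot\Bigl(\frac{T}{u}\Bigr)^{n}(\log(T/u))^{q-1}\cdot nC_{r+s}u^{n-1} = \frac{n^q}{(q-1)!}\prod_{i=1}^{q+1}C_i\cdot\frac{T^n(\log(T/u))^{q-1}}{u},
\]
and the substitution $v=\log u$ gives $\int_1^T u^{-1}(\log(T/u))^{q-1}\,du=(\log T)^q/q$. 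Hence the coefficient of $X^{2n}Y^q$ equals $\tfrac{n^q}{q!}\prod_{i=1}^{q+1}C_i$, as claimed.

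The main obstacle I expect is the bookkeeping in the inductive step: verifying that every case of the integral $\int_1^T u^\gamma (\log u)^a\,du$ contributes a polynomial of the correct type, confirming that no subleading combination $(k,b,j,a)$ pollutes the top monomial $X^{2n}Y^q$, and isolating the unique configuration $k=2n,\,b=d_{r+s}n,\,j=q-1,\,\gamma=-1$ that produces it.
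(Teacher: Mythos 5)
Your proposal is essentially correct and uses the standard conditioning/Fubini induction that is the natural way to prove this; the paper itself simply cites Lemma~5.2 of \cite{algint} rather than reproving, and that lemma is almost certainly established by the same scheme (peeling off one distance function, pushing forward Lebesgue measure, and expanding binomially into integrals of the form $\int_1^T u^\gamma(\log u)^a\,du$). Your identification of the unique configuration $k=2n$, $b=d_{r+s}n$, $j=q-1$, $\gamma=-1$ producing $T^{n}(\log T)^q$, and the computation $\int_1^T u^{-1}(\log(T/u))^{q-1}\,du=(\log T)^q/q$, are correct and give the stated coefficient $\tfrac{n^q}{q!}\prod_{i=1}^{q+1}C_i$.

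One technical slip: the push-forward of Lebesgue measure under $\x\mapsto\tN_{r+s}(\x)^{d_{r+s}}$ is \emph{not} absolutely continuous on $[1,T]$. Because $\tN_{r+s}\geq1$ and the set $\{\tN_{r+s}\leq 1\}=\{\tN_{r+s}=1\}$ has positive measure $p_{r+s}(1)$ (e.g.\ for the monic Mahler measure it is the set of polynomials with all roots in the closed unit disc), the cumulative distribution jumps from $0$ to $p_{r+s}(1)$ at $u=1$. The Fubini identity should therefore read
\begin{equation*}
\Vol(Z(T))=p_{r+s}(1)\,V^{\natural}(T)+\int_{1}^{T}V^{\natural}(T/u)\,F'_{r+s}(u)\,du,
\end{equation*}
and your formula is missing the boundary term. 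This does not damage the result: by the inductive hypothesis the extra term $p_{r+s}(1)V^{\natural}(T)$ is again a polynomial in $T^{1/2}$ and $\log T$ with $\deg_X\leq 2n$ and $\deg_Y\leq q-1$, so it neither disturbs the degrees nor the coefficient of $X^{2n}Y^q$. But as stated your Fubini identity is false, and the exact polynomial equality in the lemma requires including that atom.
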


\begin{proof}
This is a special case of Lemma 5.2 of \cite{algint}.
\end{proof}

The $V_j(Z'(T))$ were already computed in \cite{algint}.

\begin{lemma}\label{lemvolZ'}
For each $j=1, \dots , mn-1$, there exists a polynomial $P_j(X,Y)$ in $\R [X,Y]$, with $\deg_X P_j \leq 2 \en$, $\deg_Y P_j \leq q$, and the coefficient of $X^{2\en} Y^q$ is 0, such that, for every $T\geq 1$, we have
\begin{equation*}
V_j(Z'(T))= P_j\left(T^{\frac{1}{2}}, \log T \right).
\end{equation*}
\end{lemma}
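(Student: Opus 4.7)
The plan is to compute, for each $j$-element coordinate subset $J \subseteq \{1,\ldots,mn\}$, the volume of the orthogonal projection $\pi_J(Z'(T))$, and then sum over all such $J$. Writing $J_i$ for the coordinates of $\bo{x}_i$ lying in $J$ and $j_i = |J_i|$, so that $\sum_i j_i = j$ and $0 \le j_i \le d_i n$, the monotonicity of each $\tN'_i$ in the components $|x_{i,l}|$ shows that setting the non-$J$ coordinates to zero minimises every factor of the product; hence
$$
\pi_J(Z'(T)) = \Bigl\{(\bo{y}_1,\ldots,\bo{y}_{r+s}) \in \R^{j_1} \times \cdots \times \R^{j_{r+s}} : \prod_{i=1}^{r+s} \gamma_i^{d_i} \max(1,|\bo{y}_i|_\infty)^{d_i} \le T \Bigr\},
$$
with $|\bo{y}_i|_\infty$ the restriction of the real (respectively complex) max-norm to the $J_i$-coordinates.

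To compute $\Vol(\pi_J(Z'(T)))$, I would introduce the scale variables $Y_i = \max(1,|\bo{y}_i|_\infty)$. Their pushforward measures have cumulative distribution function $A_i Y_i^{j_i}$ on $[1,\infty)$, where $A_i$ is the volume of the restricted unit ball (explicitly $2^{j_i}$ for real $i$ and $\pi^{a_i} 2^{b_i}$ for complex $i$, with $a_i$ and $b_i$ counting the full and split complex pairs in $J_i$). After substituting $L_i = \log Y_i$ and $L = \log(T/\Gamma)$ with $\Gamma = \prod \gamma_i^{d_i}$, the volume becomes a finite sum, indexed by subsets $S \subseteq \{i : j_i > 0\}$ specifying which factors contribute atoms at $Y_i = 1$, of explicit constants times simplex integrals
$$
I_S(L) = \int_{L_i > 0\ \forall i \in S,\ \sum_{i \in S} d_i L_i \le L} \prod_{i \in S} e^{j_i L_i}\, dL_i.
$$
A Laplace-transform (or iterated elementary integration) argument expresses each $I_S(L)$ as a linear combination of exponentials $e^{\beta_i L} = (T/\Gamma)^{\beta_i}$, with $\beta_i = j_i/d_i \in \tfrac{1}{2}\Z_{>0}$, multiplied by polynomials in $L$ whose degrees are one less than the multiplicity of the corresponding $\beta_i$ value. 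This exhibits $\Vol(\pi_J(Z'(T)))$ as a polynomial $P_J(T^{1/2}, \log T)$ with $\deg_X P_J \le 2n$ (since $\beta_i \le n$) and $\deg_Y P_J \le q = r+s-1$.

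The crucial point is the vanishing of the $X^{2n}Y^q$ coefficient. For the monomial $T^n(\log T)^q$ to appear we need simultaneously the maximal exponential $e^{nL}$ and $q = r+s-1$ logarithmic factors. The latter forces $|S| = r+s$, since the simplex integral in $|S|$ variables produces a log factor of degree at most $|S| - 1$. Combined with the former, it requires $\beta_i = n$ for every $i \in S$, i.e.\ $j_i = d_i n$ for all $i \in \{1,\ldots,r+s\}$, yielding $j = \sum_i j_i = mn$ and contradicting $j \le mn - 1$. Summing the $P_J$ over all $j$-element $J \subseteq \{1,\ldots,mn\}$ then produces the desired $P_j$. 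The main obstacle is the careful bookkeeping of the various $S$-contributions in the pushforward expansion and the verification that none of them can give rise to a $T^n(\log T)^q$ term; once the above mutual-exclusion argument is in place, the conclusion follows.
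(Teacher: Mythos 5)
The paper gives no argument for this lemma: it simply cites Lemma 5.4 of the author's earlier work \cite{algint}, so there is nothing inline to compare against. Your strategy is the natural one and its skeleton is correct: decompose $V_j$ as a sum over $j$-element coordinate subsets $J$; use the monotonicity of each $\tN'_i$ in the moduli of the coordinates to identify $\pi_J(Z'(T))$ as the sublevel set, in the surviving coordinates, of the same product $\prod_i \gamma_i^{d_i}\max(1,|\bo{y}_i|_\infty)^{d_i}\leq T$; pass to $L_i=\log Y_i$ and reduce to simplex integrals with constraint $\sum d_iL_i\leq \log(T/\Gamma)$; expand via partial fractions on the Laplace transform $1/(s\prod_i(sd_i-j_i))$ as a sum of $(T/\Gamma)^{\beta_i}$ (with $\beta_i=j_i/d_i\in\tfrac12\Z$) times a polynomial in $L$ of degree one less than the multiplicity of $\beta_i$. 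Your mutual-exclusion argument for the vanishing of the $X^{2n}Y^q$ coefficient is exactly the right key step: that coefficient would require all $r+s$ blocks to contribute a non-atomic simplex variable with $\beta_i=n$, whence $j=\sum d_in=mn$, contradicting $j\leq mn-1$.

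Two points should be tightened before this is a complete proof. First, the indexing is self-contradictory: you introduce $S$ as "the factors contributing atoms at $Y_i=1$," yet the integral $I_S$ ranges over $L_i>0$ for $i\in S$; the simplex integral must run over the \emph{non}-atomic blocks, so either the description or the index set needs to be complemented. Second, the substitution $L=\log(T/\Gamma)$ only gives the claimed polynomial identity on all of $[1,\infty)$ if $\Gamma=\prod_i\gamma_i^{d_i}\leq 1$. This does follow from the standing hypotheses — \eqref{hyp2} forces $\Vol\, Z_i(T)>0$ for $T>1$, so $\inf_\z \tN_i(\z)=1$ by \eqref{hyp1}, and then $\gamma_i=\tN'_i(\bo{0})=\inf_\z\tN'_i(\z)\leq\inf_\z\tN_i(\z)=1$ — but it deserves a sentence, since without it $Z'(T)$ would be empty on an initial interval and the exact equality would fail there.
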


\begin{proof}
See \cite{algint}, Lemma 5.4.
\end{proof}

For an integer $u$, we will use the notation
$$
X^{(u)}=\left\lbrace
\begin{array}{ll}
X^u, & \mbox{ for $u>0$,}\\
1, & \mbox{ for $u \leq 0$,}
\end{array}
\right.
$$
in order to avoid possible appearances of $0^0$, for instance in the following proposition, where we must consider $(\log T)^q$ for $T\geq 1$ and $q$ can be 0. 

Moreover, for $\Lambda$ a lattice, we define
$$
\mathfrak{D}(\Lambda)=\frac{1}{\det \Lambda}+\sum_{j=0}^{{mn}-1}\frac{1}{\lambda_1 \dots \lambda_j}
$$

\begin{proposition}\label{mainprop}
Let $\mc{N}$ be a $(r,s)$-system of dimension $n$ that satisfies the above hypotheses on the volumes of the sets $Z_i(T)$ and $\Lambda$ a lattice. There exist two positive real constants $E$ and $E'$, depending only on $\mc{N}$, such that, for every $T \geq 1$,
$$
\left| \left|Z(T) \cap \Lambda \right|-  \frac{n^q\prod_{i=1}^{q+1} C_i }{q!\det\Lambda}   T^n \left( \log T \right)^{(q)} \right|  
 \leq \left\lbrace
\begin{array}{ll}
 \mathfrak{D}(\Lambda)\left( E T^n \left( \log T \right)^{(q-1)}+E'\right), & \mbox{if $q\geq 1$,}\\
 \mathfrak{D}(\Lambda) E  T^{n-\frac{1}{m}},& \mbox{if $q=0$.}
\end{array} \right.
$$
Moreover, if $T<1$, then $Z(T)=\emptyset$. 
\end{proposition}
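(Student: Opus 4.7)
The plan is to apply Theorem \ref{counttheorem} to the lattice $\Lambda$ and the semialgebraic family $Z$ of (\ref{defsetprod}), whose fibers were already shown to be bounded, then to substitute the explicit descriptions of $\Vol(Z(T))$ and of the $V_j(Z'(T))$ supplied by Lemmas \ref{lemvol} and \ref{lemvolZ'}. The final assertion that $Z(T)=\emptyset$ for $T<1$ is immediate from hypothesis \eqref{hyp1}, which forces $\prod_{i=1}^{r+s}\tN_i(\bo{x}_i)^{d_i}\geq 1>T$.

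Now fix $T\geq 1$. Combining Theorem \ref{counttheorem} with the inclusion $V_j(Z(T))\leq V_j(Z'(T))$ yields
\begin{equation*}
\left||Z(T)\cap\Lambda|-\frac{\Vol(Z(T))}{\det\Lambda}\right|\leq c_Z\sum_{j=0}^{mn-1}\frac{V_j(Z'(T))}{\lambda_1\cdots\lambda_j}.
\end{equation*}
Setting $M(T)=\frac{n^q\prod_{i=1}^{q+1}C_i}{q!}T^n(\log T)^{(q)}$, Lemma \ref{lemvol} writes $\Vol(Z(T))-M(T)=R(T)$, where $R$ is a linear combination of monomials $T^{a/2}(\log T)^b$ with $0\leq a\leq 2n$, $0\leq b\leq q$ and $(a,b)\neq(2n,q)$; Lemma \ref{lemvolZ'} supplies an estimate of exactly the same form for each $V_j(Z'(T))$. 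The triangle inequality therefore reduces the proposition to dominating every such remaining monomial by the asserted error bound.

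For $q\geq 1$ and $T\geq 1$ one checks elementarily that
\begin{equation*}
T^{a/2}(\log T)^b\leq C\bigl(T^n(\log T)^{(q-1)}+1\bigr)
\end{equation*}
for some $C=C(\mc{N},a,b)$, by splitting into the cases $a=2n,\,b\leq q-1$ and $a\leq 2n-1,\,b\leq q$ (in the latter one absorbs the surplus half-power of $T$ into an extra log factor, using that $\log T$ grows slower than any positive power of $T$) and by covering the bounded range $1\leq T\leq e$ through the additive constant. Summing the resulting inequalities over the finitely many monomials appearing in $R$ and in the $P_j$'s of Lemma \ref{lemvolZ'}, and grouping $1/\det\Lambda$ with the $1/(\lambda_1\cdots\lambda_j)$ into the quantity $\mathfrak{D}(\Lambda)$, delivers the first branch of the claimed estimate.

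When $q=0$ we have $r+s=1$, hence $m=d_1$, and by (\ref{sets})--(\ref{defsetprod}) the set $Z(T)$ coincides with $Z_1(T^{1/m})$, whose volume by \eqref{hyp2} is the polynomial $p_1(T^{1/m})$ in $T^{1/m}$. Thus $R(T)$ is a polynomial in $T^{1/m}$ of degree at most $mn-1$, giving $|R(T)|\leq C T^{(mn-1)/m}=C T^{n-1/m}$. The corresponding set $Z'(T)$ is a product of $n$ intervals or disks of radius $\sim T^{1/m}$, so each of its $j$-dimensional coordinate projections has volume at most $C T^{j/m}\leq C T^{n-1/m}$ for $j\leq mn-1$. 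Combining these with $\mathfrak{D}(\Lambda)$ as before yields the second branch. The main obstacle lies precisely in this degenerate case: the generic form $Q(T^{1/2},\log T)$ provided by Lemmas \ref{lemvol} and \ref{lemvolZ'} is too coarse when $m=1$, and one must instead exploit the explicit product structure of $Z(T)$ and $Z'(T)$ in the single-factor setting in order to recover the sharper $T^{n-1/m}$ exponent.
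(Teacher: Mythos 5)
Your proposal is correct and follows essentially the same route as the paper's proof: both apply Theorem \ref{counttheorem} together with Lemmas \ref{lemvol} and \ref{lemvolZ'} to reduce the error to a bivariate polynomial in $T^{1/2}$ and $\log T$ whose top monomial $X^{2n}Y^q$ is absent, then bound this polynomial by $ET^n(\log T)^{(q-1)}+E'$ for $T\geq 1$; and both handle $q=0$ separately by exploiting the explicit cube/disk structure of $Z(T)=Z_1(T^{1/m})$ and $Z'(T)$ to extract the sharper $T^{n-1/m}$ bound that the generic $Q(T^{1/2},\log T)$ form cannot deliver when $m=1$. The only cosmetic difference is that you bound the remainder monomial by monomial while the paper bounds the whole polynomial at once (splitting at $T=3$ rather than $T=e$).
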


\begin{proof}
For $T<1$, $Z(T)=\emptyset$ since we supposed $\tN_i(\bo{x}) \geq 1$ for every $\x$. Suppose $T \geq 1$.

We start with the case $q=0$. In this case, our system $\mc{N}$ consists only of one function $N_1$ that can be either real ($d_1=m=1$) or complex ($d_1=m=2$). In any case, the volume of the set $Z(T) \subseteq \R^{m n}$ equals $p_1\left(T^{\frac{1}{m}}\right)$ for every $T \geq 1$, where $p_1$ has degree $m n $ and leading coefficient $C_1$.

Fix a $j$, $1\leq j \leq mn-1$. Any projection of $Z'(T)$ to a $j$-dimensional coordinate subspace has volume at most $F_jT^{\frac{j}{m}}$, for some positive real constant $F_j$. Therefore, there exists an $E''$ such that
$$
V_j(Z'(T))\leq E'' T^{n-\frac{1}{m}},
$$
for every $T \geq 1$, and by (\ref{ineq1}) we have the claim if $q=0$.

Suppose $q>0$. By (\ref{ineq1}), Lemma \ref{lemvol} and Lemma \ref{lemvolZ'}, we have the following inequality, for every $T\geq 1$,
\begin{equation*}
\left| \left|Z(T) \cap \Lambda \right|- \frac{n^q\prod_{i=1}^{q+1} C_i }{q!\det\Lambda}   T^n \left( \log T \right)^{(q)} \right| \leq 
 \mathfrak{D}(\Lambda) P\left(T^{\frac{1}{2}},\log T\right), 
\end{equation*}
for some polynomial $P(X,Y)\in \R[X,Y]$ with $\deg_XP\leq 2n$, $\deg_YP\leq q$, whose coefficients depend on $\mc{N}$ and the coefficient of $X^{2n}Y^q$ is 0. Since $P$ satisfies such conditions, there exists a positive $E$ such that 
$$
P\left(T^{\frac{1}{2}},\log T\right)\leq E T^n \left( \log T \right)^{(q-1)},
$$
for every $T\geq 3$. For $T \in [1,3]$, the function of $T$ given by $P\left(T^{\frac{1}{2}},\log T\right)$ is bounded, say by $E'$. Then 
$$
P\left(T^{\frac{1}{2}},\log T\right)\leq E T^n \left( \log T \right)^{(q-1)}+E',
$$
for every $T\geq 1$. Clearly, $E$ and $E'$ depend only on the coefficients of $P$ and therefore only on $\cN$. 
\end{proof}

\section{Proof of Theorem \ref{mainthm}}

In this section we prove Theorem \ref{mainthm}.

Recall that we fixed a number field $\ks$ of degree $\m$ over $\Q$. Let $\sigma_1,\dots ,\sigma_{r}$ be the real embeddings of $k$ and $\sigma_{r+1},\dots ,\sigma_{r+2s}$ be the complex ones, indexed in such a way that $\sigma_i=\overline{\sigma_{i+s}}$, for every $i=r+1,\dots, r+s$. For $\bo{a}=(a_1,\dots, a_n)\in k^n$, we set $\sigma_i(\bo{a})=(\sigma_i(a_1),\dots, \sigma_i(a_n))\in \R^n$ for $i=1,\dots, r$ and $\sigma_i(\bo{a})=(\Re( \sigma_i(a_1)),\Im( \sigma_i(a_1)),\dots,\Re( \sigma_i(a_n)),\Im( \sigma_i(a_n)))\in \R^{2n}$ for $i=r+1,\dots, r+s$.

Let $\mathfrak{A}$ be a non-zero fractional ideal of $\ks$. The image of $\A$ via the embedding $\sigma: a \hookrightarrow (\sigma_1 (a), \ldots ,\sigma_{r+s} (a))$ is a lattice in $\R^\m$. If we set  $\Lambda_\A=\tau(\A^n)$, where $\tau(\bo{a})=(\sigma_1 (\bo{a}), \ldots ,\sigma_{r+s} (\bo{a}))$, for $\bo{a}\in k^n$, then $\Lambda_\A$ is a lattice in $\R^{mn}$. Recall that $\fN(\A)$ indicates the norm of $\A$ and $\Delta_k$ the discriminant of $k$.

\begin{lemma}\label{lemdet}
We have 
$$
\det \Lambda_\A =\left( 2^{-s} \fN(\A) \sqrt{|\Delta_\ks|}\right)^{\n},
$$
and the first successive minimum of $\Lambda_\A $ with respect to the Euclidean distance is $\lambda_1 \geq \fN(\A)^{\frac{1}{\m}}$.
\end{lemma}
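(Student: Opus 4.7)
The plan is to establish both assertions by reducing to the single-variable case ($n=1$) and then invoking classical facts about ideal lattices and about the relationship between the norm and the Minkowski embedding.

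For the determinant, I would first fix a $\Z$-basis $\omega_1,\ldots,\omega_m$ of $\mathfrak{A}$. Then $\{\omega_j e_l : 1\leq j\leq m,\,1\leq l\leq n\}$ (with $e_l$ the standard basis of $k^n$) is a $\Z$-basis of $\mathfrak{A}^n$. Because $\tau(\omega_j e_l)$ is supported only in the $l$-th block of $m$ coordinates, after permuting coordinates the resulting $mn\times mn$ basis matrix is block-diagonal with $n$ identical $m\times m$ blocks $M$, whose columns are the vectors $\tau(\omega_1),\ldots,\tau(\omega_m)$. Hence $\det\Lambda_{\mathfrak{A}}=|\det M|^{n}$, and it suffices to quote the standard formula $|\det M|=2^{-s}\mathfrak{N}(\mathfrak{A})\sqrt{|\Delta_k|}$: replacing each pair of rows $\Re\sigma_i,\Im\sigma_i$ by $\sigma_i,\bar\sigma_i$ multiplies the determinant by $|i/2|^s=2^{-s}$, and the resulting matrix with all $m$ embeddings as rows has squared determinant equal to the ideal discriminant $\mathfrak{N}(\mathfrak{A})^2|\Delta_k|$.

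For the first minimum, I would take an arbitrary nonzero $\boldsymbol{a}=(a_1,\ldots,a_n)\in\mathfrak{A}^n$, choose an index $j$ with $a_j\neq 0$, and observe that since $(a_j)\subseteq\mathfrak{A}$ the norm $|N_{k/\Q}(a_j)|=\prod_{i}|\sigma_i(a_j)|^{d_i}$ is a positive integer multiple of $\mathfrak{N}(\mathfrak{A})$, so $|N(a_j)|\geq\mathfrak{N}(\mathfrak{A})$. Applying AM--GM to the $m$ positive numbers $|\sigma_i(a_j)|^2$ counted with multiplicity $d_i$ yields
\[
\sum_{i=1}^{r+s} d_i\,|\sigma_i(a_j)|^2 \;\geq\; m\,|N(a_j)|^{2/m} \;\geq\; m\,\mathfrak{N}(\mathfrak{A})^{2/m}.
\]
Since $|\Re\sigma_i(a_j)|^2+|\Im\sigma_i(a_j)|^2=|\sigma_i(a_j)|^2$ for each complex place,
\[
\|\tau(\boldsymbol{a})\|^2 \;\geq\; \|\tau(a_j)\|^2 \;=\; \sum_{i=1}^{r+s}|\sigma_i(a_j)|^2 \;\geq\; \tfrac{1}{2}\sum_{i=1}^{r+s}d_i\,|\sigma_i(a_j)|^2 \;\geq\; \tfrac{m}{2}\,\mathfrak{N}(\mathfrak{A})^{2/m}.
\]
When $s\geq 1$ one has $m\geq 2$ so $\sqrt{m/2}\geq 1$; when $s=0$ every $d_i=1$, the factor $\tfrac12$ is unnecessary, and the bound becomes $\|\tau(\boldsymbol{a})\|^2\geq m\,\mathfrak{N}(\mathfrak{A})^{2/m}\geq\mathfrak{N}(\mathfrak{A})^{2/m}$. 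Either way $\|\tau(\boldsymbol{a})\|\geq\mathfrak{N}(\mathfrak{A})^{1/m}$, which gives the claim on $\lambda_1$.

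The main obstacle is bookkeeping rather than substance: keeping the $2^{-s}$ factor in the right place in the determinant computation (it stems solely from the real/imaginary splitting of the complex places), and verifying that the AM--GM argument on the Euclidean norm of $\tau(a_j)$ still produces the clean constant $1$ in front of $\mathfrak{N}(\mathfrak{A})^{1/m}$ despite the mismatch between the weights $d_i$ appearing in the norm and the uniform weights appearing in the Euclidean inner product. The case split $s=0$ versus $s\geq 1$ handles this mismatch at no cost.
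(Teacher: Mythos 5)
Your proof is correct. The paper does not argue this lemma directly; it merely cites Lemma 5 of Masser--Vaaler \cite{Masser2007} (stated there for integral ideals) and remarks that the same arguments extend to nonzero fractional ideals. Your proposal supplies the actual argument, and it is the standard one: the coordinate permutation reduces the determinant of $\Lambda_{\mathfrak A}$ to the $n$th power of the covolume $2^{-s}\mathfrak N(\mathfrak A)\sqrt{|\Delta_k|}$ of the ideal lattice $\sigma(\mathfrak A)\subset\R^{m}$, and the first-minimum bound comes from $(a_j)\subseteq\mathfrak A$ forcing $|N_{k/\Q}(a_j)|\geq\mathfrak N(\mathfrak A)$ (here $|N(a_j)|/\mathfrak N(\mathfrak A)=\mathfrak N(\mathfrak A^{-1}(a_j))$ is a positive integer even when $\mathfrak A$ is fractional, which is the point your phrasing implicitly uses) followed by AM--GM on the $m$ weighted terms $d_i|\sigma_i(a_j)|^2$. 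The case split $s=0$ versus $s\geq 1$ is a tidy way to absorb the factor $\tfrac12$ that arises because the Euclidean norm weights all $r+s$ archimedean places equally while the norm form weights them by $d_i$, and it yields exactly the clean constant $1$ in $\lambda_1\geq\mathfrak N(\mathfrak A)^{1/m}$. In short: same content as the cited Masser--Vaaler lemma, written out in full rather than deferred to the reference.
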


\begin{proof}
In \cite{Masser2007} this Lemma is stated for integral ideals (\cite{Masser2007}, Lemma 5). The same arguments work also for non-zero fractional ideals. 
\end{proof}

To prove Theorem \ref{mainthm} we need an estimate for the cardinality of $\Oseen_S^{\mc{N}}(\cH)$, i.e., the set of points $\bo{a} \in \Oseen_S^n$ such that $H_\mathcal{N} (1,\bo{a}) \leq \mc{H}$.

Recall that we set $d_i=1$, for $i=1,\dots ,r$, and $d_i=2$, for $i=r+1,\dots ,r+s$.
As in Section \ref{intro}, we call $S_\fin$ the set of non-archimedean places in $S$. 

First suppose $S_\fin=\emptyset$, then $\Oseen_S=\Oseen_\ks$ and $|S|=q+1=r+s$. Note that, if $\bo{a}$ is a vector with integer coordinates, its non-archimedean absolute values are smaller than or equal to 1. Then
$$
H_{\cN}(1,\bo{a})
=\prod_{v \in M_k}N_v(1,\sigma_v(\bo{a}))^{\frac{d_v}{\m}}= \prod_{i=1}^{r+s}\tN_i(\sigma_i(\bo{a}))^{\frac{d_i}{\m}} ,
$$
for every $\bo{a} \in \Oseen_k^n$.
Therefore, the number of $\bo{a} \in \Oseen_\ks^n$ such that $H_{\cN}(1,\bo{a} )\leq \mc{H}$ is the number of lattice points of $\Lambda_{\Oseen_\ks}=\tau(\Oseen_\ks^n)$ in $Z(\mc{H}^\m)$. By Lemma \ref{lemdet}, $\det \Lambda_{\Oseen_\ks}= \left(  2^{-s} \sqrt{|\Delta_{\ks}|} \right)^\n$ and $\lambda_1\geq 1$. Thus, $\mathfrak{D}(\Lambda_{\Oseen_\ks})\leq {mn}+2^{sn}$. Moreover, for every $\cH_0>1$ there exists a $C_0=C_0(\cN,\cH_0)$ such that, if $q\geq 1$, 
$$
({mn}+2^{sn})\left( E \cH^{\m\n} \left( \log \cH^{\m} \right)^{(q-1)}+E'\right)\leq C_0 \cH^{\m\n} \left( \log \cH \right)^{(q-1)},
$$
for every $\cH\geq \cH_0$ and, in case $q=0$, $({mn}+2^{sn}) E\leq C_0 $. The claim of Theorem \ref{mainthm} follows applying Proposition \ref{mainprop}.\\

From now, to avoid confusion between Cartesian powers and powers of an ideal with respect to the operation of ideal multiplication, we indicate the latter by $\A^{\star(d)}$ for a non-zero fractional ideal $\A$ and an integer $d$.

Now, suppose $S_\fin=\{ v_1, \dots ,v_L\}$, with $L>0$. In this case we cannot apply Proposition \ref{mainprop} to $\tau(\Oseen_S^n)$ directly because it is dense in $\R^{mn}$.

Recall that $v_l$ corresponds to the prime ideal $\fp_l$ of $\Oseen_\ks$. Let $\mc{I}_{S}$ be the set of non-zero integral ideals $\A$ in $\Oseen_\ks$ which are products of the prime ideals we fixed, i.e., $\A=\fp_1^{\star(g_1)} \dots \fp_L^{\star(g_L)}$ for some non-negative integers $g_1, \dots ,g_L$. An $\bo{a} \in \ks^n$ is in $\Oseen_S^n$ if and only if there exists an ideal $\A \in \mc{I}_{S}$ such that $a_u \in \A^{\star(-1)}$ for every $u=1,\dots ,n$, i.e., $\tau(\bo{a})=(\sigma_1(\bo{a}), \dots ,\sigma_{r+s}(\bo{a})) \in \Lambda_{\A^{\star(-1)}}$ which is a lattice in $\R^{mn}$.  We will therefore apply Proposition \ref{mainprop} to lattices of this form and then combine the obtained estimates.

We set
$$
V_{\ks,\cN} = \frac{n^q 2^{sn} }{q!\left( \sqrt{|\Delta_\ks|}\right)^{n}} \prod_{i=1}^{q+1} C_i .
$$
For a non-zero integral ideal $\A$ and $T>0$, by $Z(\A,T)$ we indicate the set of $\bo{a} \in \ks^n$ such that $\tau(\bo{a}) \in \Lambda_{\A^{\star(-1)}} \cap Z(T^\m)$.

\begin{lemma}\label{lemZA}
There exist two positive constants $F$ and $F'$, depending only on $\cN$ such that, for $T\geq 1$ and every non-zero integral ideal $\A$, we have
\begin{multline*}
\left| \vphantom{\left( \log T^\m \right)^{(q)} }|Z(\A,T)|- V_{\ks,\cN} \fN(\A)^nT^{\m n} \left( \log T^\m \right)^{(q)}  \right|  \\
\leq \left\lbrace
\begin{array}{ll}
 \fN(\A)^n \left( F T^{\m n} \left( \log T^\m \right)^{(q-1)}+F'\right), & \mbox{if $q\geq 1$,}\\
 \fN(\A)^n F T^{\m n-1},& \mbox{if $q=0$.}
\end{array}
\right.
\end{multline*}
Moreover, if $T<1$, $Z(\A,T)=\emptyset$.
\end{lemma}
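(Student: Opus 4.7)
The plan is to reduce the estimate to a single application of Proposition~\ref{mainprop} to the lattice $\Lambda_{\A^{\star(-1)}}$, with the bound parameter of the proposition taken to be $T^{m}$. By the very definition of $Z(\A,T)$ we have $|Z(\A,T)|=|\Lambda_{\A^{\star(-1)}}\cap Z(T^{m})|$, so the case $T<1$ follows immediately from the last sentence of Proposition~\ref{mainprop} (applied to $T^{m}<1$), giving $Z(\A,T)=\emptyset$.

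Now assume $T\geq 1$, so that $T^{m}\geq 1$ and Proposition~\ref{mainprop} applies. Its main term is
$$
\frac{n^{q}\prod_{i=1}^{q+1}C_{i}}{q!\,\det\Lambda_{\A^{\star(-1)}}}\;T^{mn}\bigl(\log T^{m}\bigr)^{(q)}.
$$
By Lemma~\ref{lemdet} applied to the fractional ideal $\A^{\star(-1)}$, $\det\Lambda_{\A^{\star(-1)}}=(2^{-s}\fN(\A)^{-1}\sqrt{|\Delta_{\ks}|})^{n}$, so this main term collapses to $V_{\ks,\cN}\,\fN(\A)^{n}\,T^{mn}(\log T^{m})^{(q)}$, which matches the target exactly.

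It remains to control $\mathfrak{D}(\Lambda_{\A^{\star(-1)}})$, and the only non-routine point is showing that it is bounded by a constant multiple of $\fN(\A)^{n}$. The determinant contribution gives $1/\det\Lambda_{\A^{\star(-1)}}=(2^{s}/\sqrt{|\Delta_{\ks}|})^{n}\fN(\A)^{n}$. For the successive-minima terms, Lemma~\ref{lemdet} yields $\lambda_{1}(\Lambda_{\A^{\star(-1)}})\geq \fN(\A)^{-1/m}$, and since $\lambda_{j}\geq\lambda_{1}$ one obtains $1/(\lambda_{1}\cdots\lambda_{j})\leq \fN(\A)^{j/m}\leq \fN(\A)^{n}$, using $\fN(\A)\geq 1$ and $j\leq mn-1$. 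Summing over the $mn$ terms shows $\mathfrak{D}(\Lambda_{\A^{\star(-1)}})\leq C\,\fN(\A)^{n}$ for a constant $C$ depending only on $\cN$. Plugging this estimate into the error term of Proposition~\ref{mainprop} and absorbing the three absolute constants $E$, $E'$, $C$ into new constants $F$, $F'$ gives the stated inequality in the cases $q\geq 1$ and $q=0$ separately (the latter using $(T^{m})^{n-1/m}=T^{mn-1}$).

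The main (and quite mild) obstacle is the bookkeeping needed to show that every $1/(\lambda_{1}\cdots\lambda_{j})$ contribution is dominated by the determinant contribution $\fN(\A)^{n}$; this reduces to the elementary inequality $\fN(\A)^{j/m}\leq\fN(\A)^{n}$ valid for $j\leq mn$ and $\fN(\A)\geq 1$, which is made available precisely because the only information on the higher minima we have is the monotonicity $\lambda_{j}\geq\lambda_{1}$ combined with the lower bound for $\lambda_{1}$ coming from Lemma~\ref{lemdet}.
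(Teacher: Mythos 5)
Your proof is correct and follows essentially the same route as the paper: apply Proposition \ref{mainprop} to $\Lambda_{\A^{\star(-1)}}$ with parameter $T^m$, use Lemma \ref{lemdet} to evaluate the determinant (so the main term collapses to $V_{\ks,\cN}\fN(\A)^nT^{mn}(\log T^m)^{(q)}$) and to bound $\lambda_1$ from below, and deduce $\mathfrak{D}(\Lambda_{\A^{\star(-1)}})\le C\,\fN(\A)^n$ from $\lambda_1\cdots\lambda_j\ge\lambda_1^j\ge\fN(\A)^{-j/m}\ge\fN(\A)^{-n}$ together with $|\Delta_\ks|\ge 1$. The bookkeeping, including the $q=0$ exponent $(T^m)^{n-1/m}=T^{mn-1}$, matches the paper's argument.
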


\begin{proof}
Note that, by Lemma \ref{lemdet}, the first successive minimum of $\Lambda_{\A^{\star(-1)}}$ is greater than or equal to $\fN(\A)^{-\frac{1}{\m}}$. Since $\fN(\A)$ is a positive integer, we have
$$
\prod_{i=1}^{j}\lambda_i \geq \fN(\A)^{-\frac{j}{\m}} \geq \fN(\A)^{-\frac{{mn}-1}{\m}}=\fN(\A)^{-n+\frac{1}{\m}} \geq \fN(\A)^{-n},
$$
for every $j=1, \dots ,{mn}-1$. Moreover, $|\Delta_\ks|\geq 1$. The claim follows from Proposition \ref{mainprop} and Lemma \ref{lemdet}, after noting that 
$$
\mathfrak{D}\left(\Lambda_{\A^{\star(-1)}}\right)\leq {mn} \fN(\A)^n +\frac{ 2^{sn}\fN(\A)^n }{\left( \sqrt{|\Delta_\ks|}\right)^{n}}\leq \fN(\A)^n \left({mn}+2^{sn} \right).
$$
\end{proof}

We fix a $T\geq 1$. For a non-zero integral ideal $\A$, let $Z^*(\A,T)$ be the subset of $Z(\A,T)$ consisting of the points $\bo{a}$ such that, for every $\B $ strictly dividing $\A$, there is a $u\in \{1, \dots ,n\}$ such that  $a_u \not\in \B^{\star(-1)}$. In other words, $\bo{a}$ corresponds to a lattice point of $\Lambda_{\A^{\star(-1)}}$ that is not contained in any sublattice of the form $\Lambda_{\B^{\star(-1)}}$ where $\B$ is a strict divisor of $\A$. We have
$$
|Z(\A,T) |= \sum_{\B \mid \A}|Z^*(\B,T)|.
$$
If $\mu_\ks$ is the M\"obius function for the non-zero ideals of $\Oseen_k$, the M\"obius inversion formula implies that
$$
\left|Z^*(\A,T)\right| = \sum_{\B \mid \A} \mu_\ks(\B) \left|Z\left(\A\B^{\star(-1)},T\right)\right|.
$$
Lemma \ref{lemZA} gives us an estimate for $|Z^*(\A,T) |$, for every $T\geq 1$,
\begin{multline}\label{Zstarest}
\left| |Z^*(\A,T)|- V_{\ks,\cN}  \sum_{\B \mid \A} \mu_\ks(\B) \fN\left( \A\B^{\star(-1)}\right)^{n} T^{\m n} \left( \log T^\m \right)^{(q)}  \right|  \\
\leq \left\lbrace\begin{array}{ll}
 \sum_{\B \mid \A}| \mu_\ks(\B)| \fN\left(\A\B^{\star(-1)}\right)^{n} \left( F T^{\m n} \left( \log T^\m \right)^{(q-1)}+F'\right) , & \mbox{if $q\geq 1$,} \\
F \sum_{\B \mid \A} |\mu_\ks(\B)| \fN\left(\A\B^{\star(-1)}\right)^{n} T^{\m n-1}  ,& \mbox{if $q=0$,}
\end{array} \right.
\end{multline}
and $Z^*(\A,T)=\emptyset$ if $T<1$.

Recall that $\Oseen^\mc{N}_S(\cH)$ is the set of points $\bo{a} \in \Oseen_S^n$ with $H_\cN(1,\bo{a})\leq \cH$.

\begin{lemma}
For every $\mc{H} \geq 1$ we have
\begin{equation}\label{defosh}
\left| \Oseen^\mc{N}_S(\cH) \right|= \sum_{\substack{ \A \in \mc{I}_{S}, \\ \fN(\A)^{-1} \cH^\m\geq 1}}\left| Z^*\left(\A,\fN(\A)^{-\frac{1}{\m}} \cH\right)\right|.
\end{equation}
\end{lemma}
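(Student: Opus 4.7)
The plan is to partition $\Oseen_S^n$ according to the ``denominator ideal'' of each vector and to identify each piece of the partition with a primitive set $Z^*(\A,\cdot)$.

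First I would observe that every $\bo{a}\in\Oseen_S^n$ admits a unique minimal $\A=\A(\bo{a})\in\mc{I}_S$ with $\bo{a}\in(\A^{\star(-1)})^n$. Writing $\A=\fp_1^{\star(g_1)}\cdots\fp_\eL^{\star(g_\eL)}$, the containment $a_u\in\A^{\star(-1)}$ for all $u$ is equivalent to $g_\el\geq -\min_u\ord_{\fp_\el}(a_u)$ for every $\el$, while integrality of the $a_u$ at primes outside $S_\fin$ is automatic from $\bo{a}\in\Oseen_S^n$. Setting $g_\el=\max\{0,-\min_u\ord_{\fp_\el}(a_u)\}$ defines $\A(\bo{a})$, and for any $\B\in\mc{I}_S$ one has $\bo{a}\in(\B^{\star(-1)})^n$ if and only if $\A(\bo{a})\mid\B$. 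Equivalently, $\tau(\bo{a})\in\Lambda_{\A(\bo{a})^{\star(-1)}}$, but $\tau(\bo{a})\notin\Lambda_{\B^{\star(-1)}}$ whenever $\B$ is a strict divisor of $\A(\bo{a})$. This partitions $\Oseen_S^n$ indexed by $\mc{I}_S$.

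Second I would evaluate $H_{\cN}(1,\bo{a})$ place by place using the definition \eqref{defH}. At the archimedean place corresponding to the $i$-th embedding one gets a factor $\tN_i(\sigma_i(\bo{a}))^{d_i/\m}$; at a non-archimedean $v\notin S$ the factor is $1$ since $\bo{a}$ is $v$-integral; at $v_\el\in S_\fin$, using $|a|_{v_\el}^{d_{v_\el}}=\fN(\fp_\el)^{-\ord_{\fp_\el}(a)}$ and the choice of $g_\el$ above, one obtains $\max\{1,|a_1|_{v_\el},\ldots,|a_n|_{v_\el}\}^{d_{v_\el}}=\fN(\fp_\el)^{g_\el}$. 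Multiplying over all places,
$$
H_{\cN}(1,\bo{a})^{\m}=\fN(\A(\bo{a}))\prod_{i=1}^{r+s}\tN_i(\sigma_i(\bo{a}))^{d_i}.
$$

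To conclude, the bound $H_\cN(1,\bo{a})\leq\cH$ rewrites as $\prod_i\tN_i(\sigma_i(\bo{a}))^{d_i}\leq\cH^\m/\fN(\A(\bo{a}))$, that is, $\tau(\bo{a})\in Z\bigl((\fN(\A(\bo{a}))^{-1/\m}\cH)^\m\bigr)$. Combined with the minimality from the first paragraph, this is exactly $\bo{a}\in Z^*\bigl(\A(\bo{a}),\fN(\A(\bo{a}))^{-1/\m}\cH\bigr)$. Summing over the partition yields
$$
|\Oseen_S^{\cN}(\cH)|=\sum_{\A\in\mc{I}_S}\left|Z^*\bigl(\A,\fN(\A)^{-1/\m}\cH\bigr)\right|,
$$
and the vanishing $Z^*(\A,T)=\emptyset$ for $T<1$ noted after \eqref{Zstarest} restricts the sum to $\fN(\A)^{-1}\cH^\m\geq 1$, giving the claim. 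The only real obstacle is the clean bookkeeping of the finite-place height, which reduces to the identity $\max_u\max\{0,-\ord_{\fp_\el}(a_u)\}=g_\el$; everything else is direct rewriting.
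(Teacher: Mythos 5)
Your proof is correct and follows essentially the same route as the paper's: partition $\Oseen_S^n$ by the unique minimal ideal $\A(\bo{a})\in\mc{I}_S$ with $\bo{a}\in(\A^{\star(-1)})^n$, compute $H_\cN(1,\bo{a})^\m=\fN(\A(\bo{a}))\prod_i\tN_i(\sigma_i(\bo{a}))^{d_i}$ place by place, and invoke $Z^*(\A,T)=\emptyset$ for $T<1$ to truncate the sum. The paper phrases the argument in the reverse direction (starting from $\bo{a}\in Z^*(\A,T)$ and recovering its height), but the content and the key valuation identity are identical; your version merely spells out the bookkeeping at the finite places a bit more explicitly.
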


\begin{proof}
Let $\A=\fp_1^{\star(g_1)} \dots \fp_L^{\star(g_L)}$ and recall $d_{v_l}=[\ks_{v_l}:\Q_{v_l}]$ is the local degree of $k$ at $v_l$. Every point $\bo{a} \in Z^*(\A,T)$ is such that $\max_{u \in \{1,\dots ,n \}}| a_u|_{v_l}^{d_{v_l}}=\fN\left(\fp_l\right)^{g_l}$, for every $l=1,\dots , L$, and $\max_{u \in \{1,\dots ,n \}} | a_u|_{v}\leq 1$ for all $v \not\in S$. This means that every $\bo{a} \in Z^*(\A,T)$ satisfies
$$
\prod_{v \nmid \infty} \max_u\{1,| a_u|_v \}^{d_v}= \fN(\A),
$$
and thus
$$
H_\mc{N} (1,\bo{a})=\fN(\A)^{\frac{1}{\m}}\prod_{i=1}^{r+s}\tN_i(\sigma_i(\bo{a}))^{\frac{d_i}{\m}} \leq \fN(\A)^{\frac{1}{\m}} T.
$$
Therefore, $\bo{a} \in \Oseen^\mc{N}_S(\cH)$ if and only if there exists an $\A  \in \mc{I}_{S}$ such that $\bo{a} \in Z^*\left(\A,\fN(\A)^{-\frac{1}{\m}} \cH\right)$. Since such an $\A$ is unique and recalling that, if $T<1$, then $Z^*(\A,T)$ is empty, we obtain the claim.
\end{proof}

Let $\mc{I}_{S}(T)$ be the set of ideals in $\mc{I}_{S}$ with norm not exceeding $T$ and recall that the norm is multiplicative.
Combining (\ref{defosh}) with (\ref{Zstarest}), we have that
$$
\left| \left|\Oseen^\mc{N}_S(\cH)\right|- V_{\ks,\cN} \sum_{\A \in \mc{I}_{S}\left( \cH^\m\right)} \sum_{\B \mid \A} \frac{\mu_\ks(\B)}{\fN(\B)^{n} }   \cH^{\m n} \left( \log \left(\frac{\cH^\m}{\fN(\A)}\right) \right)^{(q)}  \right|  
$$
is smaller than or equal to
$$
 \sum_{\A \in \mc{I}_{S}\left( \cH^\m\right)}\sum_{\B \mid \A} \frac{|\mu_\ks(\B)|}{\fN(\B)^{n} }  \left(F \cH^{\m n} \left( \log \left( \frac{\cH^\m}{\fN(\A)}\right) \right)^{(q-1)} +F' \fN(\A)^{n} \right) 
 $$
 if $q\geq 1$ and
 $$
F \sum_{\A \in \mc{I}_{S}\left( \cH^\m\right)}\sum_{\B \mid \A} \frac{|\mu_\ks(\B)|}{\fN(\B)^{n} } \fN(\A)^{\frac{1}{\m}}  \cH^{\m n-1}
$$
if $q=0$, for every $\cH \geq 1$.

Now, let $\Psi^{(1)}(\A)=\sum_{\B \mid \A} \frac{\mu_\ks(\B)}{\fN(\B)^n}$ and $\Psi^{(2)}(\A)=\sum_{\B \mid \A} \frac{|\mu_\ks(\B)|}{\fN(\B)^n}$. Therefore
\begin{multline} \label{mainterm}
\left| \left|\Oseen^\mc{N}_S(\cH)\right|-V_{\ks,\cN} \cH^{\m n} \sum_{\A \in \mc{I}_{S}( \cH^\m) } \Psi^{(1)}(\A) \left( \log \left(\frac{\cH^\m}{\fN(\A)}\right) \right)^{(q)} \right| \\
\leq \left\lbrace    \begin{array}{ll}  
 \sum_{\A \in \mc{I}_{S}(  \cH^\m) } \Psi^{(2)}(\A)\left(F \cH^{\m n} \left( \log\left(\frac{\cH^\m}{\fN(\A)}\right) \right)^{(q-1)} +F' \fN(\A)^{n} \right) , & \mbox{if $q\geq 1$,} \\
F \cH^{\m n-1}  \sum_{\A \in \mc{I}_{S}(  \cH^\m) } \Psi^{(2)}(\A)\fN(\A)^{\frac{1}{\m}}  ,& \mbox{if $q=0$.}
\end{array} \right. 
\end{multline} 

Let $ K $ be a non-negative integer, we set
$$
\cL^{(h)}_{S}(\cH,K)=\sum_{\A \in \mc{I}_{S}(  \cH^\m) } \Psi^{(h)}(\A) \left( \log \left(\frac{\cH^\m}{\fN(\A)}\right) \right)^{(K)},
$$
for $h=1,2$. Recall that we defined $\fN(S)=(\fN(\fp_1),\dots , \fN(\fp_L))$, and let
$$
F^{(h)}_l=\frac{\Psi^{(h)}(\fp_l)}{\log \fN\left(\fp_l \right)} .
$$
In the next lemma we allow $S_{\fin}$ to be empty as the base step for the induction.

\begin{lemma}\label{lemlog}
For every non-negative integer $K$, there exists a positive constant $U_{K,\fN(S)}$, depending only on $K$ and $\fN(S)$, such that for $h=1,2$ and for every $\cH\geq 1$
$$
\left| \cL^{(h)}_{S}(\cH,K)- \left( \prod_{l=1}^L F^{(h)}_l \right)   \left( \prod_{i=K+1}^{K+L} \frac{1}{i}  \right)\vphantom{ \left( \prod_{l=1}^L F^{(h)}_l \right)   \left( \prod_{i=K+1}^{K+L} \frac{1}{i}  \right)} (\log \cH^\m )^{(K+L)} \right| 
 \leq U_{K,\fN(S)} \left( \log \cH^\m +1 \right)^{(K+L-1)} .
$$
\end{lemma}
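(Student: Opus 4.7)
The plan is to induct on $L = |S_\fin|$, with $K$ arbitrary throughout. The base case $L = 0$ is immediate: $\mc{I}_{S} = \{\Oseen_\ks\}$ with $\Psi^{(h)}(\Oseen_\ks) = 1$, so $\cL^{(h)}_S(\cH,K) = (\log \cH^\m)^{(K)}$, which matches the claimed main term exactly (both empty products equal $1$ and $K + L = K$), and we may take $U_{K,\emptyset} = 0$.

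For the inductive step, set $S' := S \setminus \{v_\eL\}$ and $q := \fN(\fp_\eL)$. Every $\A \in \mc{I}_S$ factors uniquely as $\A = \A' \cdot \fp_\eL^{\star(g)}$ with $\A' \in \mc{I}_{S'}$ and $g \geq 0$; since $\A'$ and $\fp_\eL$ are coprime, the multiplicativity of $\mu_\ks$ and $|\mu_\ks|$ yields $\Psi^{(h)}(\A) = \Psi^{(h)}(\A')\,\Psi^{(h)}(\fp_\eL^{\star(g)})$. A direct computation shows $\Psi^{(h)}(\fp_\eL^{\star(0)}) = 1$ and $\Psi^{(h)}(\fp_\eL^{\star(g)}) = 1 + (-1)^h/q^\n$ for every $g \geq 1$ (all higher powers of $\fp_\eL$ are non-squarefree, hence killed by $\mu_\ks$). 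Writing $M := \log \cH^\m$, $\Delta := \log q$ and $G := \lfloor M/\Delta\rfloor$, the condition $\fN(\A) \leq \cH^\m$ becomes $0 \leq g \leq G$ together with $\fN(\A') \leq \cH^\m q^{-g}$, so
\begin{equation*}
\cL^{(h)}_S(\cH,K) \;=\; \sum_{g=0}^{G} \Psi^{(h)}(\fp_\eL^{\star(g)}) \, \cL^{(h)}_{S'}\!\bigl(\cH q^{-g/\m},\, K\bigr).
\end{equation*}

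Apply the inductive hypothesis to each inner $\cL^{(h)}_{S'}$, giving a main-term contribution $A\cdot(M - g\Delta)^{(p)}$ with $p := K+L-1$ and $A := \bigl(\prod_{l=1}^{L-1}F^{(h)}_l\bigr)\bigl(\prod_{i=K+1}^{K+L-1} 1/i\bigr)$, plus an error bounded by $U_{K,\fN(S')}(M - g\Delta + 1)^{(p-1)}$. For the main term the elementary Riemann-sum estimate
\begin{equation*}
\sum_{g=1}^{G}(M - g\Delta)^p \;=\; \frac{1}{\Delta}\int_{0}^{G\Delta}(M - u)^p\,du + O(M^p) \;=\; \frac{M^{p+1}}{\Delta(p+1)} + O(M^p + 1),
\end{equation*}
valid for $p\ge 1$ because $0 \leq M - G\Delta < \Delta$, combined with the identity $\Psi^{(h)}(\fp_\eL^{\star(1)})/\Delta = F^{(h)}_\eL$, yields
\begin{equation*}
\sum_{g=0}^G \Psi^{(h)}(\fp_\eL^{\star(g)})(M-g\Delta)^{(p)} \;=\; \frac{F^{(h)}_\eL}{K+L}\,M^{K+L} + O\bigl((M+1)^{(K+L-1)}\bigr),
\end{equation*}
since the $g=0$ term and the discrepancy $\Psi^{(h)}(\fp_\eL^{\star(0)}) - \Psi^{(h)}(\fp_\eL^{\star(1)}) = -(-1)^h q^{-\n}$ each contribute only $O(M^p)$. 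Multiplying by $A$ collapses the constants into the claimed main coefficient $\bigl(\prod_{l=1}^{L}F^{(h)}_l\bigr)\bigl(\prod_{i=K+1}^{K+L} 1/i\bigr)$.

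For the accumulated inductive error, $|\Psi^{(h)}(\fp_\eL^{\star(g)})| \leq 2$ and the same integral comparison give $\sum_{g=0}^{G}(M - g\Delta + 1)^{(p-1)} = O\bigl((M+1)^{(p)}\bigr)$ whenever $p \geq 1$; in the degenerate subcase $L=1,\,K=0$ (so $p=0$) the base case has error zero, so no accumulation occurs. Both contributions are thus absorbed into a term of size $O\bigl((\log\cH^\m+1)^{(K+L-1)}\bigr)$, and choosing $U_{K,\fN(S)}$ large enough closes the induction. The main obstacle is uniformity of constants: one must verify that the implied constants in the Riemann-sum estimate and in each application of the inductive hypothesis depend only on $K$ and $\fN(S)$ (through $\Delta = \log q$), not on $\cH$ or on $\ks$, and must handle the boundary cases where the convention $X^{(u)} = 1$ for $u \leq 0$ makes the nominal ``lower-order'' contribution competitive with the main one.
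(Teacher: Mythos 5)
Your proposal is correct, but it takes a genuinely different route from the paper's proof, although both proceed by induction on $L = |S_\fin|$ with identical base case. The paper keeps the sum over $\B \in \mc{I}_{S'}$ on the outside and the new exponent $g_L$ on the inside, expands $\bigl(\log(\cH^\m/\fN(\B)) - g_L\log\fN(\fp_L)\bigr)^K$ by the binomial theorem, evaluates the inner power sum $\sum_{g_L} g_L^i$ with Faulhaber's formula, and then uses the identity $\sum_{i=0}^{K}(-1)^i\binom{K}{i}\frac{1}{i+1}=\frac{1}{K+1}$ to collapse everything into the recursion $\cL_S^{(h)}(\cH,K)\approx\frac{F_L^{(h)}}{K+1}\cL_{S'}^{(h)}(\cH,K+1)$, to which the inductive hypothesis is applied \emph{after} the recursion (note the jump from $K$ to $K+1$). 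You instead swap the order of summation: the new exponent $g$ goes outside, the inherited ideals inside, giving $\cL_S^{(h)}(\cH,K)=\sum_{g=0}^G\Psi^{(h)}(\fp_L^{\star(g)})\,\cL_{S'}^{(h)}(\cH q^{-g/\m},K)$, and you apply the inductive hypothesis at the same $K$ but with the rescaled parameter $\cH q^{-g/\m}$ \emph{before} summing, then sum the resulting asymptotics by a Riemann-sum/integral comparison rather than Faulhaber plus the binomial identity. Both routes are valid and of comparable difficulty; yours avoids the binomial bookkeeping and the key combinatorial identity entirely, replacing them with a single integral comparison, while the paper's keeps all the approximation inside the cleaner one-step recursion in $K$. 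You also correctly flag the genuine subtlety at $p = K+L-1 = 0$ (i.e. $L=1$, $K=0$) and handle it by observing that the $L=0$ base case has zero error, and you correctly note that the constants depend only on $K$, $\fN(S)$, and $n$, matching what the lemma requires.
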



\begin{proof}

We proceed by induction on the cardinality of $S_{\fin}$. Clearly, we can define $\cL^{(h)}_{S'}(\cH,K)$ and $\mc{I}_{S'}$ for $S'= S\setminus \{v_L \}$. 

If $S_{\fin}$ is empty, i.e., $L=0$, then $ \mc{I}_{S}(  \cH^\m)=\{\Oseen_\ks\}$ and $\cL^{(h)}_{S}(\cH,K)=(\log \cH^\m )^{(K)}$, for every $\cH\geq 1$. 

Now suppose $S_{\fin}$ has cardinality $L>0$. The sum over all $\A \in \mc{I}_{S}(  \cH^\m)$ can be viewed as two sums: the first over all $\B \in \mc{I}_{S'}(  \cH^\m)$, and the second over all non-negative integers $g_L$, with $\fN\left(\fp_L^{\star(g_L)}\right)\leq  \cH^\m \fN(\B)^{-1}$.
For typographical convenience we set 
$$
A(\B)=\left\lfloor \frac{\log \left(  \cH^\m \fN(\B)^{-1}\right)}{\log \fN\left(\fp_L \right)  }\right\rfloor,
$$
and
$$
R= \mc{I}_{S'}(  \cH^\m).
$$
We have 
\begin{gather*}
 \cL^{(h)}_{S}(\cH,K)=\sum_{\B \in R} \sum_{g_L=0}^{A(\B)}\Psi^{(h)}\left(\B\fp_L^{\star(g_L)}\right)\left( \log \left(\frac{ \cH^\m}{\fN(\B)}\right)-g_L\log \fN\left(\fp_L \right) \right)^{(K)}\\ 
= \sum_{\B \in R} \sum_{g_L=1}^{A(\B)}\Psi^{(h)}\left(\B\fp_L^{\star(g_L)}\right) \sum_{i=0}^{K} (-1)^i \binom{K}{i} (\log \fN\left(\fp_L \right))^i g_L^i  \left( \log \left( \frac{ \cH^\m}{\fN(\B)}\right)\right)^{(K-i)} + \cL^{(h)}_{S'}(\cH,K).
\end{gather*} 
Using the definitions of $ \Psi^{(h)}$, it is easy to see that $1/2 \leq \Psi^{(h)}(\fp_l) \leq 3/2$ for every $l$ and, if $g_L\geq 1$,
\begin{equation}\label{Psi}
\Psi^{(h)}\left(\B\fp_L^{\star(g_L)}\right) = \Psi^{(h)}(\B\fp_L)  = \Psi^{(h)}(\B) \Psi^{(h)} (\fp_L)> 0.
\end{equation}
Therefore, 
\begin{gather} 
 \cL^{(h)}_{S}(\cH,K)  =  \Psi^{(h)} (\fp_L) \sum_{i=0}^{K} (-1)^i \binom{K}{i}    \left(\log \fN\left(\fp_L \right)\right)^i   \sum_{\B\in R} \Psi^{(h)}(\B)\left( \log \left( \frac{ \cH^\m}{\fN(\B)}\right)\right)^{(K-i)}\sum_{g_L=1}^{A(\B)}g_L^i \nonumber \\   \label{LS} + \cL_{S'}^{(h)}(\cH,K)   . 
\end{gather}
Using Faulhaber's formula, for every $i=0, \ldots ,K$, we have
$$
\sum_{g_L=1}^{A(\B)}g_L^i - \frac{1}{i+1}\left\lfloor \frac{\log \left( \cH^\m \fN(\B)^{-1}\right)}{\log \fN\left(\fp_L \right)}\right\rfloor^{i+1} = Q_i\left(\left\lfloor \frac{\log \left( \cH^\m \fN(\B)^{-1}\right)}{\log \fN\left(\fp_L \right)}\right\rfloor\right),
$$
where $Q_i $ is a polynomial of degree $i$ (except $Q_0 =0$) whose coefficients depend only on $i$. Then
$$
\left|\sum_{g_L=1}^{A(\B)}g_L^i - \frac{1}{i+1}\left( \frac{\log \left( \cH^\m \fN(\B)^{-1}\right)}{\log \fN\left(\fp_L \right)}\right)^{i+1}  \right| \leq Q_i'\left( \log \left(\frac{ \cH^\m}{\fN(\B)}\right)\right),
$$
where $Q_i' $ is a polynomial of degree at most $i$, whose coefficients depend on $i$ and $ \fN\left(\fp_L \right)$. Finally, after noting that
$$
 \sum_{i=0}^{K} (-1)^i \binom{K}{i}\frac{1}{i+1}=\frac{1}{K+1},
$$
by (\ref{LS}), we can derive the following inequality:
\begin{multline*}
\left|\cL^{(h)}_{S}(\cH,K) -\frac{F^{(h)}_L}{K+1}\sum_{\B\in R} \Psi^{(h)}(\B)\left( \log \left( \frac{ \cH^\m}{\fN(\B)}\right)\right)^{(K+1)} \right|  \\
\leq \cL_{S'}^{(h)}(\cH,K)+  \sum_{\B\in R} \Psi^{(h)}(\B) Q\left( \log \left( \frac{ \cH^\m}{\fN(\B)}\right)\right),
\end{multline*}
where $Q$ is a polynomial of degree at most $K$ whose coefficient depend only on $K$ and $ \fN\left(\fp_L \right)$.
Therefore, we have
$$
\left|\cL_{S}^{(h)}(\cH,K)-\frac{F^{(h)}_L}{K+1} \cL_{S'}^{(h)}(\cH,K+1)\right|\leq  \sum_{i=0}^{K}  b_i \cL_{S'}^{(h)}(\cH,i),
$$
where the $b_i$ are real coefficients again depending on $K$ and $ \fN\left(\fp_L \right)$. Now, by the inductive hypothesis, there exist $U_{K+1,\fN(S')}$ and $U'_{i,\fN(S')}$, for $i=0, \dots ,K$, such that
\begin{multline*}
\left| \cL_{S'}^{(h)}(\cH,K+1)-\left( \prod_{l=1}^{L-1} F^{(h)}_l \right)   \left( \prod_{i=K+2}^{K+L} \frac{1}{i}  \right) \left(\log \cH^\m \right)^{(K+L)}  \right|   \\
\leq U_{K+1,\fN(S')} \left(\log \cH^\m  +1 \right)^{(K+L-1)} ,
\end{multline*}
and
$$
\cL_{S'}^{(h)}(\cH,i) \leq U'_{i,\fN(S')}  \left(\log \cH^\m +1 \right)^{(i+L-1)} ,
$$
for every $i=0, \dots ,K$.
The claim follows easily.
\end{proof}

\begin{lemma} \label{lemq0}
There exists a real constant $V_{m,\fN(S)}$, depending only on $m$ and $\fN(S)$, such that 
$$
 \sum_{\A \in \mc{I}_{S}(  \cH^\m) } \Psi^{(2)}(\A)\fN(\A)^{\frac{1}{\m}} \leq V_{m,\fN(S)} \cH \left( \log \cH +1 \right)^{(L-1)} ,
$$
for every $\cH\geq 1$.
\end{lemma}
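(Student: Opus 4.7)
The plan is to use the multiplicativity of $\Psi^{(2)}$ and of $\fN$ to separate the estimate into a geometric sum in one exponent plus a lattice-point count in the remaining $L-1$ variables. First, since $\Psi^{(2)}$ is multiplicative on coprime ideals and satisfies $\Psi^{(2)}(\fp_l^{\star(g)}) = 1 + \fN(\fp_l)^{-n}$ for $g \geq 1$, while $\Psi^{(2)}(\Oseen_\ks) = 1$, every $\A = \fp_1^{\star(g_1)} \cdots \fp_L^{\star(g_L)} \in \mc{I}_S$ satisfies $\Psi^{(2)}(\A) \leq \prod_{l=1}^L (1 + \fN(\fp_l)^{-n}) \leq 2^L$. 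Thus it suffices to bound
$$
\sum_{\A \in \mc{I}_S(\cH^\m)} \fN(\A)^{1/\m}
$$
by a constant depending on $\m$ and $\fN(S)$ times $\cH (\log \cH + 1)^{(L-1)}$.

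Write $N_l = \fN(\fp_l)$ and parametrize by $(g_1, \ldots, g_L) \in \Z_{\geq 0}^L$, so that the constraint $\fN(\A) \leq \cH^\m$ reads $N_1^{g_1}\cdots N_L^{g_L} \leq \cH^\m$. For each fixed tuple $(g_2, \ldots, g_L)$ with $\prod_{l \geq 2} N_l^{g_l} \leq \cH^\m$, set $G_1 = \lfloor \log(\cH^\m / \prod_{l \geq 2} N_l^{g_l}) / \log N_1 \rfloor$; the inner sum over $g_1$ is then a finite geometric series that I estimate by
$$
\sum_{g_1=0}^{G_1} N_1^{g_1/\m} \leq \frac{N_1^{1/\m}}{N_1^{1/\m}-1}\, N_1^{G_1/\m} \leq \frac{N_1^{1/\m}}{N_1^{1/\m}-1}\cdot \cH \prod_{l \geq 2} N_l^{-g_l/\m}.
$$
Multiplying by the outer factor $\prod_{l \geq 2} N_l^{g_l/\m}$, each valid $(g_2, \ldots, g_L)$ contributes at most $c_1 \cH$, where $c_1 = c_1(N_1)$.

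Finally I count the valid tuples. The number of $(g_2, \ldots, g_L) \in \Z_{\geq 0}^{L-1}$ with $\prod_{l \geq 2} N_l^{g_l} \leq \cH^\m$ is bounded by $\prod_{l=2}^L(\lfloor \m \log \cH / \log N_l \rfloor + 1)$, which for $L \geq 2$ is at most $c_2 (\log \cH + 1)^{L-1}$ for a constant $c_2 = c_2(\m, \fN(S))$, and for $L = 1$ equals the empty product $1 = (\log \cH + 1)^{(0)}$. Combining the three steps gives the claim with $V_{\m, \fN(S)} = 2^L c_1 c_2$. The one point needing care is the exponent of $\log \cH$: a crude bound $|\mc{I}_S(\cH^\m)| \ll (\log \cH + 1)^L$ combined with $\fN(\A)^{1/\m} \leq \cH$ would yield $\cH(\log \cH)^L$, one logarithm too many. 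It is therefore essential to treat at least one variable by the geometric-series estimate, which contributes the factor $\cH$ without any logarithmic cost.
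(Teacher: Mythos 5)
Your proof is correct, and it takes a genuinely more elementary route than the paper's. Both arguments share the central idea: peel off one non-archimedean place, sum the resulting geometric series in the corresponding exponent $g_l$, and observe that the geometric series contributes the factor $\cH$ at no logarithmic cost. You correctly identify this as the crucial point. The divergence is in how the remaining $L-1$ exponents are handled. The paper reduces the residual sum to $\cL_{S'}^{(2)}(\cH,0)$ and invokes the previously established Lemma~\ref{lemlog} (which in fact computes the leading asymptotic of that sum with an explicit constant); you instead bound $\Psi^{(2)}(\A)\leq 2^L$ uniformly up front and then bound the number of tuples $(g_2,\ldots,g_L)$ with $\prod_{l\geq 2}N_l^{g_l}\leq \cH^\m$ by the crude box estimate $\prod_{l=2}^L(\lfloor \m\log\cH/\log N_l\rfloor + 1)\ll_{\m,\fN(S)}(\log\cH+1)^{L-1}$. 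Your approach is self-contained and avoids the heavier machinery of Lemma~\ref{lemlog}, which is certainly adequate here since the lemma only requires an upper bound of the right order, not a precise asymptotic; the paper's route is shorter once Lemma~\ref{lemlog} is in place and keeps the constants in a form consistent with the rest of the argument. One small stylistic point: your formula $\Psi^{(2)}(\fp_l^{\star(g)}) = 1 + \fN(\fp_l)^{-n}$ for $g\geq 1$ gives the sharper bound $3/2$ per prime power, but $2^L$ is fine since the constant is allowed to depend on $L$ through $\fN(S)$.
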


\begin{proof}
We proceed by induction on the cardinality of $S_{\fin}$ as before. If $S_\fin$ is empty, then $ \sum_{\A \in \mc{I}_{S}(  \cH^\m) } \Psi^{(2)}(\A)\fN(\A)^{\frac{1}{\m}}=1$ and the claim holds. Now suppose $S_\fin=\{ v_1, \dots , v_L \}$, with $L>0$, and again $\fp_1 , \dots , \fp_L$ are the prime associated to the places in $S_\fin$. Let $S'=S\setminus \{ v_L\}$ and again
$$
A(\B)=\left\lfloor \frac{\log \left(  \cH^\m \fN(\B)^{-1}\right)}{\log \fN(\fp_L)}\right\rfloor.
$$
Note that $\Psi^{(2)}(\fp_L)\leq 2$ and then, by (\ref{Psi}), $\Psi^{(2)}(\B\fp_L^{\star(g_L)})\leq 2 \Psi^{(2)}(\B)$. Then
\begin{align*}
\sum_{\A \in \mc{I}_{S}(  \cH^\m) } \Psi^{(2)}(\A)\fN(\A)^{\frac{1}{\m}} &\leq \sum_{\B \in \mc{I}_{S'}\left( \cH^\m\right) }2 \Psi^{(2)}(\B) \fN(\B)^{\frac{1}{\m}} \sum_{g_L=0}^{A(\B)} \fN(\fp_L)^{\frac{g_L }{\m}}\\
&=2\sum_{\B \in \mc{I}_{S'}\left( \cH^\m\right) }\Psi^{(2)}(\B) \fN(\B)^{\frac{1}{\m}} \frac{ \fN(\fp_L)^{\frac{1}{\m}(A(\B)+1)}-1}{ \fN(\fp_L)^{\frac{1 }{\m}}-1} \\
&\leq  \frac{2\fN(\fp_L)^{\frac{1 }{\m}}}{\fN(\fp_L)^{\frac{1 }{\m}}-1} \sum_{\B \in \mc{I}_{S'}\left( \cH^\m\right) } \Psi^{(2)}(\B)  \fN(\B)^{\frac{1}{\m}} \left( \fN(\fp_L)^{\frac{\log \left( \cH^\m \fN(\B)^{-1}  \right)}{\log \fN(\fp_L)}}  \right)^{\frac{1}{\m}}\\
&=\frac{2\fN(\fp_L)^{\frac{1 }{\m}}}{\fN(\fp_L)^{\frac{1 }{\m}}-1}\sum_{\B \in \mc{I}_{S'}\left( \cH^\m\right) } \Psi^{(2)}(\B)  \fN(\B)^{\frac{1}{\m}} \left(\frac{ \cH^\m }{\fN(\B)}  \right)^{\frac{1}{\m}}\\ 
& \leq \frac{2\fN(\fp_L)^{\frac{1 }{\m}}}{\fN(\fp_L)^{\frac{1 }{\m}}-1} \cH \cL_{S'}^{(2)}(\cH,0) .
\end{align*}
The claim follows applying Lemma \ref{lemlog}.
\end{proof}

Now we are ready prove Theorem \ref{mainthm}. 

We already dealt with the case $S_{\fin}=\emptyset$. Suppose $S_\fin \neq \emptyset$. By (\ref{mainterm}) we have
\begin{align*}
\left|\left|\Oseen^\mc{N}_S(\cH)\right|- V_{\ks,\cN}\cH^{\m n} \cL_{S}^{(1)}\left(\cH,q\right) \right|  
\leq \left\lbrace\begin{array}{ll}
F \cH^{\m n}\cL_{S}^{(2)}\left(\cH,q-1\right) + F'\cH^{\m n} \cL_{S}^{(2)}(\cH,0) , & \mbox{if $q\geq 1$,} \\
F \cH^{\m n-1}  \sum_{\A \in \mc{I}_{S}(  \cH^\m) } \Psi^{(2)}(\A)\fN(\A)^{\frac{1}{\m}}  ,& \mbox{if $q=0$.}
\end{array} \right.
\end{align*}

Note that, $L\leq |S|-1$ and if $q\geq 1$, then $L\leq |S|-2$. Moreover,
$$
F^{(1)}_l=\frac{\Psi^{(1)}(\fp_l)}{\log \fN\left(\fp_l \right)}=\frac{1}{\log \fN\left(\fp_l \right)}\left( 1- \frac{1}{\fN\left(\fp_l \right)^n}\right) .
$$
 We apply Lemmas \ref{lemlog} and \ref{lemq0} and we can conclude that there exists a positive $G=G(\cN,\fN(S))$ such that 
$$
\left|\left|\Oseen^n_S(\cH)\right|- C_{\cN,\ks,S}\mc{H}^{\m n} \left( \log  \mc{H} \right)^{|S|-1}  \right| \leq 
G \cH^{\m n} \left( \log \cH +1 \right)^{|S|-2},
$$
for every $\cH\geq 1$, where $C_{\cN,\ks,S}$ was defined in (\ref{constmaint}).

Now, for every $\cH_0>1$, there exists a positive $C_0$, clearly depending on $\cN$, $\fN(S)$ and $\cH_0$ such that 
$$
G \cH^{\m n} \left( \log \cH +1 \right)^{|S|-2}\leq C_0 \cH^{\m n} \left( \log \cH  \right)^{|S|-2},
$$
and we have the claim of Theorem \ref{mainthm}

\section*{Acknowledgements}

The author would like to thank Jeffrey Vaaler for many useful discussions and the hospitality at the Department of Mathematics at UT Austin and Martin Widmer for his encouragement and his advice that significantly improved this article.

\bibliographystyle{amsplain}
\bibliography{bibliography.bib}

\end{document}